\theoremstyle{plain}
\newtheorem{THM}{Theorem}
\newtheorem*{Penrose}{The Riemannian Penrose Inequality Conjecture}
\newtheorem{lemma}{Lemma}[section]
\newtheorem{prop}[lemma]{Proposition}
\newtheorem{thm}[lemma]{Theorem}
\newtheorem{defi}[lemma]{Definition}
\newtheorem{cor}[lemma]{Corollary}
\theoremstyle{definition}
\newtheorem*{ack}{Acknowledgements}
\newtheorem{remark}[lemma]{Remark}
\newtheorem*{notation}{Notation}
\newtheorem*{pfPenrose}{Proof of Theorem~\ref{th:Penrose}}
\newtheorem*{pfthm}{Proof of Theorem~\ref{th:equality}}
\newtheorem*{pf-mean-convexity}{Proof of Theorem~\ref{th:mean-convexity}}
\newtheorem*{pf-mean-convexity-special}{Proof of Theorem~\ref{th:mean-convexity} (special case)}
\newtheorem*{pfSchwarzschild}{Proof of Theorem~\ref{th:Schwarzschild}}
\numberwithin{equation}{section}
\theoremstyle{remark}
\newcommand{\p}{\partial}
\newenvironment{enumeratei}{\begin{enumerate}[\upshape (i)]}{\end{enumerate}}
\begin{document}
\title[The equality case of the Penrose Inequality]{The equality case of the Penrose inequality for asymptotically flat graphs}
\author{Lan-Hsuan Huang}
\address{Department of Mathematics\\
University of Connecticut\\
Storrs, CT 06269, USA}
\email{lan-hsuan.huang@uconn.edu}

\author{Damin Wu}
\address{Department of Mathematics\\
University of Connecticut\\
Storrs, CT 06269, USA}
\email{damin.wu@uconn.edu}

\thanks{The first author acknowledges NSF grant DMS-$1005560$ and DMS-$1301645$ for partial support.}

\date{\today}

\begin{abstract}
We prove the equality case of the Penrose inequality in all dimensions for asymptotically flat  hypersurfaces. It was recently proven by G.~Lam~\cite{Lam} that the Penrose inequality holds for asymptotically flat graphical hypersurfaces in Euclidean space with non-negative scalar curvature and with a minimal boundary. Our main theorem states that if the equality holds, then the hypersurface is a Schwarzschild solution. As part of our proof,  we show that asymptotically flat graphical hypersurfaces with a minimal boundary and non-negative scalar curvature must be mean convex, using the argument that we developed in~\cite{Huang-Wu}. This enables us to obtain the ellipticity for the linearized scalar curvature operator and to establish the strong maximum principles for the scalar curvature equation. 
\end{abstract}

\maketitle
\section{Introduction}
The Penrose inequality in general relativity states that the ADM mass of an asymptotically flat manifold is at least the mass of the black holes that it contains, if the energy density is non-negative everywhere. A particularly important special case of the physical statement is called the Riemannian Penrose inequality.

\begin{Penrose}
Let $(M^n, g), n\ge 3$, be an asymptotically flat $n$-dimensional smooth manifold with a strictly outer-minimizing smooth minimal boundary which is compact (not necessarily connected) of total $(n-1)$-volume $A$. Suppose  $M$ has non-negative scalar curvature and ADM mass $m$. Then
\[
	m \ge \frac{1}{2} \left(\frac{A}{\omega_{n-1}} \right)^{\frac{n-2}{n-1}},
\]
where $\omega_{n-1}$ is the volume of the unit $(n-1)$-sphere in Euclidean space. Moreover, the equality holds if and only if $(M,g)$ is isometric to the region of a Schwarzschild metric outside its minimal hypersurface.
\end{Penrose}

G.~Huisken and T.~Ilmanen proved the conjecture in dimension three for a connected minimal boundary~\cite{Huisken-Ilmanen}. H. Bray used a different approach and proved the conjecture in dimension three for any number of components of the minimal boundary~\cite{Bray-Penrose}. In dimensions less than $8$, the inequality was proved by H.~Bray and D.~Lee, with the extra spin assumption for the equality case~\cite{Bray-Lee}. In the case that $(M,g)$ is conformally flat, H.~Bray and K.~Iga derived new properties of superharmonic functions in $\mathbb{R}^n$ and proved the Penrose inequality with a suboptimal constant for $n=3$~\cite{Bray-Iga},  F.~Schwartz obtained a lower bound of the ADM mass in terms of the Euclidean volume of the region enclosed by  the minimal boundary~\cite{Schwartz}, and J.~Jauregui proved a Penrose-like inequality~\cite{Jauregui}. For the Penrose inequality (with the sharp constant) in dimensions higher than $8$, the only result that we know, other than the spherically symmetric case, is the result of G.~Lam~\cite{Lam} (cf. \cite{deLima-Girao:2012}), where he proved that the Penrose \emph{inequality} for graphical asymptotically flat hypersurfaces. (Note some related work regarding the Penrose inequality for asymptotically hyperbolic graphs in~\cite{Dahl-Gicquaud-Sakovich, deLima-Girao0}.)

\begin{THM}[\cite{Lam}] \label{th:Penrose}
Let $\Omega \subset \mathbb{R}^n, n\ge 3,$ be open and bounded. Assume that either each connected component of $\Omega$ is star-shaped or $\partial \Omega$ is outer-minimizing\footnote{In \cite{Lam}, each connected component of $\Omega$ was assumed convex, but the proof can be generalized to our setting.}\footnote{The boundary $\partial \Omega$ is called outer-minimizing if whenever $\Omega'$ is a domain with $\Omega \subset \Omega'$, then $|\partial \Omega| \le | \partial \Omega'|$.}.

Let $f\in C^2 (\mathbb{R}^n \setminus \overline{\Omega}) \cap C^0 (\mathbb{R}^n \setminus \Omega)$ be asymptotically flat. We assume that the graph of $f$ has non-negative scalar curvature.  Suppose  each connected component  of $\p \Omega$ is the level set of $f$ with $|Df(x)| \to \infty$ as $x\to \partial \Omega$, and each component of $\p \Omega$ has positive (Euclidean) mean curvature in the hyperplane.  Then,
\[
	m \ge \frac{1}{2} \left(\frac{| \p \Omega|}{\omega_{n-1}} \right)^{\frac{n-2}{n-1}},
\]
where $| \p \Omega|$ is the $(n-1)$-total volume of $\p \Omega$.
\end{THM}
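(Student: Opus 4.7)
The overall strategy is to exhibit the scalar curvature $R$ of the graph of $f$ as a Euclidean divergence, so that integration of $Rw$ (with $w=\sqrt{1+|Df|^2}$) over $\mathbb{R}^n\setminus\overline{\Omega}$ couples the ADM mass (as a flux at infinity) to a geometric boundary term on $\partial\Omega$; the sign of $R$ and a sharp Minkowski-type inequality on $\partial\Omega$ then close the argument.

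Concretely, on the graph of $f$ the induced metric is $g_{ij}=\delta_{ij}+f_if_j$ and the second fundamental form is $h_{ij}=f_{ij}/w$, so the Gauss equation $R=H^2-|A|^2$ yields, after a direct computation and regrouping of derivatives, an identity of the form
\[
R\, w \;=\; \dive\!\left(\frac{(\Delta f)\,Df-\mathrm{Hess}(f)\,Df}{w^2}\right),
\]
the divergence being Euclidean. Integrating this identity on $B_r\setminus\overline{\Omega}$ and applying the divergence theorem, the integral over $\partial B_r$ converges, as $r\to\infty$, to $2(n-1)\omega_{n-1}m$: since $|Df|\to 0$ at infinity one has $w\to 1$, and the resulting integrand is exactly the classical ADM integrand applied to the graphical metric, as one checks directly from $g_{ij}=\delta_{ij}+f_if_j$.

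The inner boundary integral is where the hypotheses on $\partial\Omega$ enter. Because each component of $\partial\Omega$ is a level set of $f$ and $|Df|\to\infty$ on approach, the outward Euclidean unit normal of $\partial\Omega$ is $\nu=-Df/|Df|$ in the limit; after a careful passage to the limit---the subtle point being that $|\mathrm{Hess}(f)|$ blows up while $1/w^2$ vanishes---the boundary term reduces to $\int_{\partial\Omega}H\,d\sigma$, where $H$ is the Euclidean mean curvature of $\partial\Omega$ as a hypersurface of the ambient $\mathbb{R}^n$. Combining with $R\ge 0$ gives $2(n-1)\omega_{n-1}m\ge\int_{\partial\Omega}H\,d\sigma$.

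The theorem then follows from a sharp Minkowski/Alexandrov--Fenchel-type inequality $\int_{\partial\Omega}H\,d\sigma\ge (n-1)\omega_{n-1}^{1/(n-1)}|\partial\Omega|^{(n-2)/(n-1)}$, valid on each mean-convex star-shaped component by the theorem of Guan--Li, and on outer-minimizing mean-convex $\partial\Omega$ via monotonicity along Huisken--Ilmanen's weak inverse mean curvature flow in $\mathbb{R}^n$. The main obstacle in the whole argument is the limit computation at $\partial\Omega$: I would carry it out in Fermi coordinates $(u,y)$ around $\partial\Omega$, so that $f=f(u,y)$ with $f$ constant in $y$ on $\{u=0\}$ and $|f_u|\to\infty$ as $u\to 0^+$; a block decomposition of $\mathrm{Hess}(f)$ then separates the divergent normal derivative from the tangential derivatives, which precisely encode the second fundamental form of $\partial\Omega$ in the hyperplane.
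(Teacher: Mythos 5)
Your proposal follows essentially the same route as the paper's proof: write the scalar curvature of the graph as a Euclidean divergence, integrate over $\mathbb{R}^n\setminus\overline{\Omega}$, identify the flux at infinity with $2(n-1)\omega_{n-1}m$, reduce the inner boundary term to $\int_{\p\Omega}H_{\p\Omega}\,d\sigma$ using $|Df|\to\infty$, and conclude via the Minkowski/Alexandrov--Fenchel inequality of Guan--Li in the star-shaped case and of Huisken and Freire--Schwartz in the outer-minimizing case. Two small corrections: the identity is $R=\dive\bigl(((\Delta f)\,Df-\mathrm{Hess}(f)\,Df)/(1+|Df|^2)\bigr)$, with $R$ rather than $Rw$ on the left (harmless here, since the bulk term is discarded by non-negativity either way); and since $\p\Omega$ may be disconnected you still need the elementary subadditivity $\sum_k|\Sigma_k|^{(n-2)/(n-1)}\ge\bigl(\sum_k|\Sigma_k|\bigr)^{(n-2)/(n-1)}$ to pass from the per-component bounds to the stated inequality, while the singular limit at $\p\Omega$ is handled in the paper more simply than by Fermi coordinates, namely by applying the divergence theorem on $\mathbb{R}^n\setminus\Omega^{\epsilon}$ with $\p\Omega^{\epsilon}$ a nearby regular level set of $f$, where the boundary integrand is exactly $\frac{|Df|^2}{1+|Df|^2}H_{\Sigma^{\epsilon}_k}$ and one then lets $|Df|\to\infty$.
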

The proof is simple and elegant, which we include in Section~\ref{se:main}. However, the equality case was not discussed in~\cite{Lam}, and the techniques there seem far from sufficient to handle the equality case. Our main result in this article proves the equality case in all dimensions $n\ge 3$. It may be particularly interesting because there was no rigidity result  for the Penrose inequality, other than the spherically symmetric case, known to hold in dimensions $n \ge 8$.

\begin{THM} \label{th:equality}
Under the conditions of Theorem~\ref{th:Penrose}, suppose the graph of $f$ is $C^{n+1}$ up to boundary, and 
\begin{align} \label{eq:infinity}
\max_{|x|=r} f(x) \le  \min_{|x|=r} f(x) + C \quad \mbox{ for  $n =3$ or $4$} 
\end{align} for all $r$ sufficiently large.
If the equality holds, i.e., 
\[
	m =  \frac{1}{2} \left(\frac{|\p \Omega|}{\omega_{n-1}} \right)^{\frac{n-2}{n-1}},
\]
then the graph of $f$ is identical to the region of the Schwarzschild solution of mass $m$ outside its minimal $(n-1)$-hypersurface. 
\end{THM}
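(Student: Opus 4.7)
My strategy is first to extract the equality case of Lam's proof of Theorem~\ref{th:Penrose}, which will force the scalar curvature of the graph to vanish identically and $\partial\Omega$ to be a single round Euclidean sphere of the Schwarzschild horizon radius, and then to identify $f$ with the Schwarzschild graphing function $f_S$ via the strong maximum principle for the scalar curvature equation on a mean convex graph. The proof of Theorem~\ref{th:Penrose} sketched in Section~\ref{se:main} writes $2(n-1)\omega_{n-1}m$ as an integral of the scalar curvature of the graph against a positive weight plus a boundary integral over $\partial\Omega$, and the contribution from each connected component $\partial\Omega_i$ is bounded below by $(n-1)\omega_{n-1}^{1/(n-1)}|\partial\Omega_i|^{(n-2)/(n-1)}$ via an Alexandrov--Fenchel-type inequality (or the Minkowski inequality for star-shaped hypersurfaces). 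Equality therefore forces $R\equiv 0$ on the graph, each $\partial\Omega_i$ to be a round Euclidean sphere, and---by the strict concavity of $a\mapsto a^{(n-2)/(n-1)}$---the boundary $\partial\Omega$ to be connected; a short computation pins the radius to $r_0=(2m)^{1/(n-2)}$.

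After placing the origin of $\mathbb{R}^n$ at the center of $\Omega$, so that $\Omega$ is the open ball of radius $r_0$, let $f_S$ denote the radial Schwarzschild graphing function of mass $m$ on $\{|x|\ge r_0\}$. By Theorem~\ref{th:mean-convexity}, obtained in the paper itself via the argument of~\cite{Huang-Wu}, the graph of $f$ is mean convex; this is the crucial input that makes the linearization of the scalar curvature operator elliptic and, as the abstract indicates, yields a strong maximum principle tailored to the scalar curvature equation on mean convex graphs. I would apply this strong maximum principle to $f$ and the vertical translates $f_S+t$, both of which satisfy $R=0$ on $\{|x|\ge r_0\}$, share vertical tangent along $\partial\Omega$, and have controlled asymptotics at infinity (via~\eqref{eq:infinity} in dimensions $n=3,4$). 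A sliding comparison produces a first touching value $t_0$ at which $f=f_S+t_0$ at some point while $f\ge f_S+t_0$ everywhere; the strong maximum principle then forces $f\equiv f_S+t_0$, so the graph of $f$ coincides with the Schwarzschild solution of mass $m$.

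The principal obstacle is the degenerate vertical boundary: since $|Df|\to\infty$ along $\partial\Omega$, the interior scalar-curvature equation loses uniform ellipticity there, so neither the strong maximum principle nor a boundary Hopf lemma applies directly at $\partial\Omega$. To handle this I would invert the graph in a neighborhood of the boundary, expressing the horizontal coordinates as $C^{n+1}$ functions of the vertical coordinate; this is legitimate precisely because of the $C^{n+1}$-regularity hypothesis up to the boundary and the vertical tangent along $\partial\Omega$. In these reparametrized coordinates the image of $\partial\Omega$ becomes a transversal $C^{n+1}$ hypersurface of the horizontal plane, mean convexity of the graph is preserved, and the scalar-curvature equation becomes a non-degenerate quasilinear elliptic equation to which the classical boundary Hopf lemma and strong maximum principle apply. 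Gluing this boundary analysis with the interior strong maximum principle on $\{|x|\ge r_0+\varepsilon\}$, and invoking~\eqref{eq:infinity} for $n=3,4$ to rule out slowly growing perturbations at infinity where the borderline ADM decay is otherwise insufficient, closes the rigidity argument.
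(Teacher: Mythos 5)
Your overall architecture matches the paper's: extract $R\equiv 0$ and a single round horizon sphere from the equality case of Lam's argument, invoke Theorem~\ref{th:mean-convexity}, and then slide vertical translates of the Schwarzschild graph against $f$, re-graphing over a vertical hyperplane near $\partial\Omega$ to run a Hopf-type boundary argument where $|Df|=\infty$ (this is exactly the content of Theorem~\ref{th:boundary}). But there are two genuine gaps. First, mean convexity together with $R\ge 0$ only makes the matrix $\bigl(Hg^{ij}-\sum_k A^i_k g^{kj}\bigr)$ \emph{semi}-positive definite (Proposition~\ref{pr:ellipticity}); it does not give strict ellipticity, and a degenerate elliptic inequality supports neither a strong maximum principle nor a Hopf lemma (a hyperplane, with $H\equiv 0$ and $R\equiv 0$, makes this matrix vanish identically). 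The strictness that actually powers Theorems~\ref{th:strong} and~\ref{th:boundary} must be extracted from the \emph{comparison} function: for the Schwarzschild graph with $m>0$ the matrix is bounded below by $\tfrac{n-2}{2}\sqrt{2m}\,r^{-n/2}I$ (Proposition~\ref{pr:strict-ellipticity}). Your proposal attributes the ellipticity to the mean convexity of $f$ alone, which is not enough; you need to observe that one of the two functions in each comparison is Schwarzschild and carries the strict positivity. This is precisely the point the paper flags as new relative to the positive mass case.

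Second, for $n=3,4$ your sliding argument is not yet a proof. Since $h\to\infty$ as $|x|\to\infty$, when you lower $h+t$ toward $f$ the infimum of $h+t-f$ may be approached only as $|x|\to\infty$ and never attained, so there may be no finite first-touch point at which to apply the strong maximum principle. Condition~\eqref{eq:infinity} controls the oscillation of $f$ on spheres but does not by itself compare $f$ with the unbounded function $h$. The paper closes this by a separate global strong maximum principle on large annuli (Theorem~\ref{th:global-maximum}), whose ellipticity again comes from the Schwarzschild lower bound absorbing the asymptotically flat error terms; this yields the dichotomy that either $\max_{|x|=r}f(x)>h(r)$ for all large $r$ or $\max_{|x|=r}f(x)\le h(r)$ for all large $r$, and only then, combined with~\eqref{eq:infinity}, can the touching point be confined to a compact set. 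You need an argument of this kind before the sliding step can begin.
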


\begin{remark}
In dimensions less than $8$, the above theorem is implied by more general results in \cite{Huisken-Ilmanen, Bray-Penrose, Bray-Lee} because hypersurfaces in Euclidean space are spin. Our proof is different and works for all dimensions. The additional assumption \eqref{eq:infinity} for $n=3$ or $4$ ensures that the oscillation of $f$ at infinity is under control. We actually conjecture a stronger statement that an $n$-dimensional asymptotically flat hypersurface with zero scalar curvature has the following expansion at infinity
\begin{align}
	\begin{split} \label{eq:asymptotics}
	f(x)= \left\{ \begin{array}{ll} C_0 \sqrt{|x|} + C_1+ o(1) & \mbox{ if } n =3\\
	C_0 \ln |x| + C_1+ o(1) & \mbox{ if } n =4, \end{array} \right.
	\end{split}
\end{align}
for some constants $C_0, C_1$. This conjecture should compare with the celebrated work of R.~Schoen on the uniqueness of catenoids~\cite{Schoen}, in which a preliminary result says that complete minimal hypersurfaces have specific asymptotics at infinity, up to lower order terms.  In general, hypersurfaces with zero scalar curvature are  more difficult to analyze than minimal hypersurfaces, because the scalar curvature equation of the graphing function is fully nonlinear. Assuming  the strict ellipticity and certain asymptotic behavior of the hypersurfaces at infinity (in all dimensions, which is in particular stronger than \eqref{eq:asymptotics} in the low dimensions), J.~Hounie and M.~Leite proved the uniqueness of embedded  scalar-flat hypersurfaces with two ends~\cite{Hounie-Leite}. 
\end{remark}

Our proof of Theorem~\ref{th:equality} relies on a key observation that an asymptotically flat graphical hypersurface with a minimal boundary and with non-negative scalar curvature must be mean convex. It is inspired by our earlier work \cite{Huang-Wu}, in which we proved that \emph{closed} or  \emph{complete} asymptotically flat hypersurfaces with non-negative scalar curvature must be mean convex. 
 
\begin{THM} \label{th:mean-convexity}
Let $\Omega$ be an open and bounded subset (not necessarily connected) in $\mathbb{R}^n$.  Let $f\in C^{n+1}(\mathbb{R}^n \setminus \overline{\Omega}) \cap C^0 (\mathbb{R}^n \setminus \Omega)$ be asymptotically flat and let the graph of $f$ be $C^{n+1}$ up to boundary. Suppose each connected component of $\p \Omega$ is the level set of $f$ with $|Df(x)| \to \infty$ as $x\to \partial \Omega$.  If the scalar curvature of the graph of $f$ is non-negative, then its mean curvature $H$ has a sign, i.e., either $H\ge 0$ or  $H\le 0$ everywhere. 
\end{THM}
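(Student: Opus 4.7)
The plan is to adapt the closed/complete-hypersurface argument of \cite{Huang-Wu} to the graphical setting, working intrinsically on the hypersurface $\Sigma := \operatorname{graph}(f)$. The first observation is that, since the ambient space is flat, the Gauss equation yields $R_\Sigma = H^2 - |A|^2$, so the hypothesis $R_\Sigma \ge 0$ is equivalent to the pointwise bound $|A|^2 \le H^2$. In particular, any zero of $H$ is simultaneously a zero of $|A|$, i.e.\ a totally geodesic point. Because the graph is $C^{n+1}$ up to its boundary and asymptotically flat at infinity, both $H$ and $|A|$ are globally bounded on $\Sigma$ and extend continuously to $\partial\Sigma$.

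I would then argue by contradiction: suppose $H$ takes both signs on $\Sigma$. The interior $\Sigma^\circ$, being the graph over the connected open set $\mathbb{R}^n\setminus\overline{\Omega}$, is connected, so by the intermediate value theorem there exists an interior point $p_0$ with $H(p_0) = 0$, and hence $A(p_0) = 0$. The target is to show that this forces $A \equiv 0$ in a neighborhood of $p_0$. Once achieved, a unique continuation / rigidity argument (using that $f$ is then affine on an open set and still satisfies the fully nonlinear scalar curvature equation on the exterior) yields that $f$ must be affine on the whole of $\mathbb{R}^n \setminus \overline{\Omega}$, which contradicts the assumption $|Df|\to\infty$ at $\partial\Omega$.

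To obtain the local vanishing of $A$, the intended tool is the Simons identity in a flat ambient,
\[
\tfrac{1}{2}\Delta_\Sigma |A|^2 = |\nabla A|^2 + A^{ij}\nabla^2_{ij} H - |A|^4 + H\,\mathrm{tr}(A^3),
\]
combined with the constraint $|A| \le |H|$ coming from $R_\Sigma \ge 0$ and the a priori bound on $|H|$. The goal is to derive an elliptic differential inequality of a suitable form --- for either $|A|^2$ itself or a related quantity such as $|A|^2 H^{-2}$ on the set $\{H \ne 0\}$, or a carefully tuned linear combination of $|A|^2$ and $H^2 - |A|^2 = R_\Sigma$ --- to which one may apply the strong maximum principle near $p_0$ and conclude that $|A|^2 \equiv 0$ on a neighborhood.

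The main obstacle is precisely this middle step: the Simons identity contains the term $A^{ij}\nabla^2_{ij} H$, which has no obvious sign and is not controlled by $|A|^2$ alone, since we have no closed-form elliptic equation for $H$ at hand. Overcoming this requires exploiting the graphical structure more fully, for instance via Lam's divergence identity expressing $R_\Sigma$ as the divergence of an explicit vector field on the base $\mathbb{R}^n$, or via an ad hoc barrier construction tailored to the vertical boundary $\partial\Sigma$. Boundary behavior is a secondary difficulty: because the graph is $C^{n+1}$ up to $\partial\Sigma$ and $H$ extends continuously, any sign change of $H$ on $\Sigma$ is necessarily witnessed by an interior zero, so once the interior argument is in place the boundary contributes no additional zeros to worry about.
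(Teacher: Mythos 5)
Your opening reduction is sound and matches the paper's starting point: since $R = H^2 - \sum_{i,j}A^i_jA^j_i$, non-negative scalar curvature forces every zero of $H$ to be a geodesic point. But the middle step you identify as the target --- that $A(p_0)=0$ together with $R\ge 0$ forces $A\equiv 0$ on a neighborhood of $p_0$ --- is not merely the ``main obstacle''; it is false as a local statement. Take $u(x)=x_1^3$: the second fundamental form has rank one, so $R = 2\sigma_2(A)\equiv 0\ge 0$, yet $A$ vanishes exactly on the hyperplane $\{x_1=0\}$ and $H$ changes sign across it. No Simons-identity or strong-maximum-principle argument can rule this out locally, and the structural reason is that the linearized scalar curvature operator has coefficient matrix $\bigl(Hg^{ij}-\sum_k A^i_kg^{kj}\bigr)$, which degenerates to zero precisely at geodesic points --- so there is no ellipticity available where you need it. The subsequent ``unique continuation to an affine function'' step therefore has nothing to continue from.

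The paper's proof is instead genuinely global, and each global input is essential. One first shows (Theorem~\ref{th:unbounded}, resting on Sacksteder's lemma --- which is where the $C^{n+1}$ hypothesis enters --- and on the vanishing theorem \cite[Theorem 3.9]{Huang-Wu}) that if $H$ changed sign, the boundary of a component $M_+$ of $\{H\ge 0\}$ containing a point with $H>0$ would have to be an \emph{unbounded} set of geodesic points lying in a single hyperplane tangent to $M$ along it. Asymptotic flatness then pins that hyperplane down as the horizontal plane $\{x^{n+1}=C\}$ with $C=\lim_{|x|\to\infty}f$. Finally, for small $\epsilon>0$ one finds a \emph{compact} regular level set $\Sigma_{C+\epsilon}$ meeting $\{H>0\}$ on which $H\ge 0$, and the pointwise inequality $\langle\nu,\eta\rangle HH_\Sigma\ge \frac{R}{2}+\frac{n}{2(n-1)}\langle\nu,\eta\rangle^2H_\Sigma^2$ (Proposition~\ref{pr:HHR}) forces $H_{\Sigma_{C+\epsilon}}\le 0$ with respect to the inward normal, which is impossible for a compact hypersurface of $\mathbb{R}^n$. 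Your outline contains none of these three ingredients (unboundedness of the sign-change set, the role of the asymptotics in fixing the tangent hyperplane, and the level-set mean curvature comparison), and without them the argument cannot be completed.
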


The mean convexity enables us to derive the maximum principles for the scalar curvature equation and to compare the graph of $f$ with the Schwarzschild graph. The proof of Theorem~\ref{th:equality} is more delicate in the case $n=3$ or $4$, because the graphing function of the Schwarzschild solution tends to infinity as $|x| \to \infty$, and it is subtle to compare two unbounded graphs. To control the asymptotical behavior of $f$, we use its asymptotic flatness and develop a global strong maximum principle (Theorem~\ref{th:global-maximum}) in the region where $|x|$ is sufficiently large. The maximum principles for the scalar curvature equation are established in Section~\ref{se:maximum}.

Note that in our earlier work~\cite{Huang-Wu}, we proved the Positive Mass Theorem for hypersurfaces in Euclidean space in all dimensions, including the rigidity statement, which is a direct consequence of our proof of the positive mass inequality.  However, the proof for the Penrose case requires a new argument which uses the the \emph{strict} ellipticity of the Schwarzschild solutions of $m>0$.

In response to an interesting question raised by Christina Sormani and Dan Lee about the hypersurface which is a Schwarzschild solution outside a compact set, we have the following result.
\begin{THM} \label{th:Schwarzschild}
There is no complete $C^{n+1}$ hypersurface of one end with zero scalar curvature in $\mathbb{R}^{n+1}$ which is identical to a Schwarzschild solution with $m>0$ outside a compact set.\footnote{Note that a Schwarzschild solution may not be uniquely embedded in Euclidean space as a hypersurface. Here, we say a hypersurface is identical to a Schwarzschild solution outside a compact set  in the sense that the hypersurface is the graph of $h$ outside a compact set of a hyperplane, where $h$ is the radially symmetric function that gives a Schwarzschild solution in Proposition~\ref{pr:Schwarzschild}.}

In other words, hyperplanes are the only  complete one-ended scalar-flat $C^{n+1}$ hypersurfaces  in Euclidean space which are rotationally symmetric outside a compact set. 
\end{THM}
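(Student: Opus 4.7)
The plan is to assume such a hypersurface $M$ exists and derive a contradiction by showing that $M$ must contain the complete two-ended Schwarzschild hypersurface, which has two distinct asymptotically flat ends.

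\smallskip

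\noindent\emph{Step 1: Unique continuation up to the neck.} By hypothesis there exist a compact set $K\subset\mathbb{R}^n$ and $R>r_0$ (where $r_0$ is the Schwarzschild neck radius from Proposition~\ref{pr:Schwarzschild}) such that $M$ coincides with the graph of the Schwarzschild function $h$ over $\{|x|>R\}$. Let $U\subset\{|x|>r_0\}$ be the largest open connected subset containing $\{|x|>R\}$ over which $M$ is the graph of $h$. The set $U$ is open by definition. For relative closedness in $\{|x|>r_0\}$, take $x_k\in U$ with $x_k\to x$ and $|x|>r_0$. Since $|Dh(x)|<\infty$, the $C^{n+1}$-regularity of $M$ implies that $M$ is locally a graph of some function $f$ in a neighborhood of $x$. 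Both $f$ and $h$ solve the scalar-flat graph equation, and since $m>0$ the linearization of the scalar curvature operator at $h$ is strictly elliptic. The strong unique continuation principle applied to $f-h$, via the strong maximum principle for the scalar curvature equation developed in Section~\ref{se:maximum}, then yields $f\equiv h$ in that neighborhood, so $x\in U$. Therefore $U=\{|x|>r_0\}$.

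\smallskip

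\noindent\emph{Step 2: Smooth extension past the neck.} At $\{|x|=r_0\}$, $h$ is bounded while $|Dh|\to\infty$, so $M$ becomes vertical at each neck point $p$. Since $M$ is $C^{n+1}$ and complete, $M$ is locally a graph over its vertical tangent hyperplane at $p$, and in this local chart the Schwarzschild solution extends smoothly across the neck to the reflected side. A second application of strong unique continuation, using the strict ellipticity of the scalar curvature equation at the Schwarzschild reference (again requiring $m>0$), forces $M$ to coincide with this smooth extension on a full neighborhood of $p$. Iterating the argument of Step~1 on the reflected domain, $M$ must also contain the reflected graph $\{(x,\,2h(r_0)-h(|x|)):|x|>r_0\}$.

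\smallskip

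\noindent\emph{Step 3: Contradiction.} The union of the two Schwarzschild graphs glued along the neck is the complete two-ended Schwarzschild hypersurface in $\mathbb{R}^{n+1}$ (the Flamm paraboloid when $n=3$), which has two asymptotically flat ends as $|x|\to\infty$. Hence $M$ has at least two ends, contradicting the one-ended assumption.

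\smallskip

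The principal obstacle is making the strong unique continuation steps rigorous, since the scalar curvature equation is fully nonlinear and Aronszajn's theorem does not apply off-the-shelf. The difference $f-h$ nevertheless satisfies an equation whose top-order coefficients come from the linearization of the scalar curvature operator at $h$; strict ellipticity of this linearization holds precisely because $m>0$. This is exactly the new ingredient the paper flags as distinguishing the Penrose case from the earlier positive mass rigidity.
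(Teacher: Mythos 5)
Your route is genuinely different from the paper's: you try to reconstruct the full two-ended Schwarzschild hypersurface inside $M$ by propagating the identity $M=\mathrm{graph}(h)$ inward to the neck and then across it, and you derive the contradiction by counting ends. The paper instead runs a sliding argument: it translates $h-\lambda$ up from below until first touch at a point $p$, uses the mean convexity of $M$ (from \cite[Theorem 4]{Huang-Wu}) together with Proposition~\ref{pr:HHR} to get the sign of $H$ near $p$, applies Theorem~\ref{th:strong} or Theorem~\ref{th:boundary} to conclude that $M$ coincides with $\mathrm{graph}(h)$ outside $B_{(2m)^{1/(n-2)}}$, and then gets the contradiction from Theorem~\ref{th:strong-max}: the continuation of $M$ into the ball is a graph $v\le 0$ whose gradient, like $Dh$, must blow up at the neck, which that theorem forbids.

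The genuine gap in your argument is the repeated appeal to ``strong unique continuation \dots via the strong maximum principle for the scalar curvature equation developed in Section~\ref{se:maximum}.'' Theorems~\ref{th:strong} and \ref{th:boundary} are comparison principles: they require an a priori ordering $u\ge v$ and a touching point, and then conclude $u\equiv v$. In your Steps 1 and 2 you have no ordering between $f$ and $h$ (or between $M$ and the reflected graph) near the boundary of $U$; you only know the two solutions agree on an open subset, and you want to propagate equality across its boundary. That is unique continuation from an open set, which the maximum principle does not give. To close the gap you would need an Aronszajn-type theorem for the linear equation $\sum a^{ij}w_{ij}+\sum b^i w_i=0$ satisfied by $w=f-h$, and for that you must verify (i) uniform ellipticity of $(a^{ij})$, which involves $\big(H\delta^i_k-A^i_k\big)$ evaluated at $(Df,D^2f)$ and hence requires the mean convexity of $M$ -- an input from \cite[Theorem 4]{Huang-Wu} that you never invoke, or at least a localization near $\partial U$ where $D^2f$ is close to $D^2h$ -- and (ii) Lipschitz regularity of the leading coefficients. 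You correctly flag this as ``the principal obstacle,'' but the resolution you propose (the paper's Section~\ref{se:maximum} machinery) does not supply it; the paper's own proof is structured precisely so that only ordered comparisons are ever needed.
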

The above theorem is in contrast to a general result of J. Corvino~\cite{Corvino}, where he constructed the complete asymptotically flat manifold with zero scalar curvature which is a Schwarzschild metric outside a compact set, but not identical to a Schwarzschild solution. 

After this article was written and has been distributed among a small mathematics community, we noticed a preprint by L.~de Lima and F.~Gir\~{a}o \cite{deLima-Girao} that  announces the rigidity theorem, using the uniqueness result of Hounie-Leite \cite{Hounie-Leite} and assuming ellipticity and certain expansion of the graph at infinity. Our theorem is different -- the important part of our proof is to \emph{derive} ellipticity and to apply the maximum principles. We believe that our arguments will have more future applications to hypersurfaces in space forms with the appropriate scalar curvature condition (see, for example, \cite{Dahl-Gicquaud-Sakovich, deLima-Girao:2012b}). 

\begin{ack}
We thank Hugh Bray and Romain Gicquaud for bringing the question about the equality case of the Penrose inequality to our attention and the referee for pointing out the relevant references. We also thank Dan Lee and Professor S.~T.~Yau for helpful discussions. 
\end{ack}

\section{Definitions, notation, and preliminary results} 

\begin{defi} \label{de:AF}
Let $\Omega$ be a bounded subset in $\mathbb{R}^n$, $n\ge 3$. We say that $f\in C^2 (\mathbb{R}^n \setminus \Omega)$ is asymptotically flat if the graph of $f$ is a  $C^2$ hypersurface up to boundary which satisfies the following conditions
\begin{itemize}
\item[(1)] Either $\lim_{|x| \to \infty} f(x) = C$ for some bounded constant $C$ or $\lim_{|x| \to \infty} f(x) = \infty$ (or $-\infty$); 
\item[(2)] $|D f(x)| = O(|x|^{-\frac{q}{2} })$ and $|D^2 f(x)| = O(|x|^{-\frac{q}{2}-1})$,
for some $q > \frac{n-2}{2}$, where $Df = (f_1,\ldots, f_n), D^2 f = (f_{ij})$ and $f_i = \p f/\p x^i$, $f_{ij} = \p^2 f/ \p x^i \p x^j$;
\item[(3)] The scalar curvature of the graph of $f$ is integrable over the graph of $f$.
\end{itemize}
\end{defi}
\begin{remark}
Under Condition (2), the induced metric of the graph of $f$ has the asymptotics
\[
	g_{ij} = \delta_{ij} + f_i f_j = \delta_{ij} + O(|x|^{-q}).
\] 
The decay rate $q$ is optimal in order for the ADM mass to be well-defined, assuming Condition (3) (see~\cite{Bartnik}).
\end{remark}
\begin{remark} \label{re:AF}
Condition (1) in Definition~\ref{de:AF} is not needed in the proof of Theorem~\ref{th:Penrose}. Condition (1) is actually redundant for $n\ge 6$ because by Condition (2) and using the mean value theorem along the radial direction and along the spherical direction, we have
\[
	\lim_{|x| \to \infty }f(x) = C,
\]
for some bounded constant $C$.
\end{remark}

\begin{defi}[\cite{Huang-Wu, Lam}]
Let $\Omega$ be a bounded subset in $\mathbb{R}^n$, $n\ge 3$, and let $f \in C^{2} (\mathbb{R}^n \setminus \Omega)$. The mass of the graph of $f$ is defined by
\begin{align*} \label{eq:mass} 
	m &= \frac{1}{2(n-1)  \omega_{n-1}}  \lim_{r\rightarrow \infty}\int_{S_r} \frac{1}{ 1 + |Df|^2 } \sum_{i,j}  (f_{ii} f_j - f_{ij} f_i)  \frac{x^j}{|x|}  \, d\sigma,
\end{align*}
where  $S_r = \{ (x^1, \dots, x^n): |x|=r\}$, and $d\sigma$ is the standard spherical volume measure of $S_r$. 
\end{defi}
\begin{remark}
The above definition of the mass is consistent with the classical definition of the ADM mass \cite[Lemma 5.8]{Huang-Wu}, \emph{cf}. \cite{Lam}.
\end{remark}

The spacelike $n$-dimensional Schwarzschild metric is a complete and conformally flat metric on $\mathbb{R}^n \setminus \{ 0 \}$
\[
	\left(\mathbb{R}^n \setminus \{ 0 \}, \left(1+ \frac{m}{2|x|^{n-2}} \right)^{\frac{4}{n-2}} \delta \right),
\]
where $m$ is the ADM mass. If $m\ge0$, the $n$-dimensional Schwarzschild solution can be isometrically embedded into Euclidean $\mathbb{R}^{n+1}$ as a smooth hypersurface. We refer the reader to \cite{Bray} for detailed discussions, especially for the $n=3$ case. We are interested in the region of the Schwarzschild solution outside its minimal $(n-1)$-hypersurface, which is graphical as shown in the following proposition. 
\begin{prop}\label{pr:Schwarzschild}
Denote by $B_r$ the open ball in $\mathbb{R}^n$ centered at the origin of radius $r$. The region of the Schwarzschild solution of mass $m> 0$ outside its minimal $(n-1)$-hypersurface can be represented as the graph of $h(x)$ over $\mathbb{R}^n \setminus B_{(2m)^{1/(n-2)}}$, where
\begin{align*}
	h(x) &= C_0 + \sqrt{ 8m (|x| - 2m)}  &&\mbox{if } n =3,\\
	h(x) &= C_0 + \sqrt{2m} \ln ( |x| + \sqrt{|x|^2 - 2m} ) &&\mbox{if } n =4,\\
	h(x) &= C_0 + O(|x|^{2-\frac{n}{2}}) \quad \mbox{for $|x| \gg 1$} &&\mbox{if } n \ge 5,
\end{align*}
for some constant $C_0$.
\end{prop}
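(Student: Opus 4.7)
The plan is to realize the Schwarzschild solution in area-radius coordinates and then match that form against the induced metric of a radial graph. First, I would observe that for any rotationally symmetric graph $(x, h(|x|)) \subset \mathbb{R}^{n+1}$, the induced metric $\delta_{ij} + h_i h_j$ on the base $\mathbb{R}^n\setminus\{0\}$ reads, in polar coordinates,
\[
(1+h'(r)^2)\,dr^2+r^2\,g_{S^{n-1}}.
\]
In particular, $r$ is automatically the area radius of the graph, so to represent the Schwarzschild manifold as such a graph I need to rewrite the isotropic form in area-radius coordinates as well.

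Second, set $u(s)=1+m/(2s^{n-2})$, so that the Schwarzschild metric on $s>0$ is $u^{4/(n-2)}(ds^2+s^2 g_{S^{n-1}})$, and define the area radius $r(s) = s\, u(s)^{2/(n-2)}$. A direct computation shows that $r'(s)$ vanishes exactly at $s_0=(m/2)^{1/(n-2)}$, where $u(s_0)=2$ and hence $r(s_0)=(2m)^{1/(n-2)}$. On the outer region $s\ge s_0$, the map $r(\cdot)$ is a smooth increasing diffeomorphism onto $[(2m)^{1/(n-2)},\infty)$, and substituting $dr = r'(s)\,ds$ and simplifying produces the classical Schwarzschild form
\[
\frac{dr^2}{1-2m/r^{n-2}}+r^2\,g_{S^{n-1}}, \qquad r\ge (2m)^{1/(n-2)}.
\]
Equating this with the graph metric forces the ODE
\[
h'(r)^2 = \frac{2m}{r^{n-2}-2m}, \qquad r>(2m)^{1/(n-2)}.
\]

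Third, I would integrate. For $n=3$ direct antidifferentiation yields $h(r) = C_0+\sqrt{8m(r-2m)}$. For $n=4$, the substitution $r=\sqrt{2m}\cosh\theta$ turns the integrand into a constant in $\theta$ and, after absorbing a constant into $C_0$, gives $h(r)=C_0+\sqrt{2m}\,\ln(r+\sqrt{r^2-2m})$. For $n\ge 5$ the integral is no longer elementary, but expanding
\[
h'(r) = \sqrt{2m}\,r^{-(n-2)/2}\bigl(1-2m/r^{n-2}\bigr)^{-1/2}
\]
as a binomial series at infinity and integrating term by term shows the leading behaviour is a multiple of $r^{(4-n)/2}$ plus a constant, so $h(r) = C_0 + O(|x|^{2-n/2})$, with the higher-order terms decaying faster.

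The main obstacle is conceptual rather than technical: one must verify that the isotropic chart \emph{folds} at $s_0$, so that the minimal horizon of the Schwarzschild manifold lies at $r=(2m)^{1/(n-2)}$ and the outer region is doubly covered by the conformally flat coordinate, and then check that the graph representation remains valid down to that horizon despite $h'(r)\to\infty$ there. The latter is automatic, because $h'\to\infty$ means the tangent plane becomes vertical, which is precisely the geometric characterization of a minimal-horizon graph in $\mathbb{R}^{n+1}$; once this is noted, everything else in the proposition reduces to the ODE computation above.
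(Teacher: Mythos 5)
Your proposal is correct, but it reaches the key ODE by a genuinely different route than the paper. You start from the isotropic form $u^{4/(n-2)}(ds^2+s^2g_{S^{n-1}})$ with $u=1+m/(2s^{n-2})$, pass to area-radius coordinates to recover the classical form $\frac{dr^2}{1-2m/r^{n-2}}+r^2g_{S^{n-1}}$ on $r\ge(2m)^{1/(n-2)}$, and equate this with the induced metric $(1+(h')^2)dr^2+r^2g_{S^{n-1}}$ of a radial graph, which forces $(h')^2=\frac{2m}{r^{n-2}-2m}$. The paper never invokes the classical Schwarzschild form: it writes down the scalar curvature of a general rotationally symmetric graph, observes that the substitution $y=-\frac{1}{1+(h')^2}$ linearizes the equation $R\equiv 0$ into $y'+\frac{n-2}{r}y+\frac{n-2}{r}=\frac{rR}{2(n-1)}$, solves to get $(h')^2=\frac{C_1}{r^{n-2}-C_1}$, and only then identifies $C_1=2m$ by evaluating the graph-mass flux integral. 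The subsequent integrations in dimensions $3$, $4$, and $\ge 5$ are identical in both arguments. Your route is more constructive --- it exhibits the isometry with the Schwarzschild exterior directly and explains geometrically why the horizon sits at $r=(2m)^{1/(n-2)}$ via the fold of the area-radius function at $s_0=(m/2)^{1/(n-2)}$ --- but it imports the classical form of the metric as external input. The paper's route stays entirely within the graph formalism used throughout the rest of the article and yields, as a by-product, the full classification of rotationally symmetric scalar-flat graphs with the mass appearing as the constant of integration, which is the form in which the result is reused later. One small imprecision on your side: the assertion that a vertical tangent plane is ``precisely'' the characterization of a minimal horizon is glib in general, though correct here since the coordinate sphere $\{|x|=r\}$ lifted to the graph has mean curvature $\frac{n-1}{r\sqrt{1+(h')^2}}\to 0$ as $h'\to\infty$; the paper likewise leaves this as a ``straightforward check.''
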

\begin{proof}
Let $h \in C^2(\mathbb{R}^n\setminus B_{r_0})$ for some $r_0\ge0$ be rotationally symmetric. With a minor abuse of notation, we will write $h(x) = h(r)$ where $r= |x|$. By direct computation, the scalar curvature $R$ of the graph of $h$ is given by 
\[
   \frac{R}{2}= \frac{(n-1)h'' h'}{r\big[1 + (h')^2\big]^2} +  \frac{\binom{n-1}{2}(h')^2}{r^2\big[1 + (h')^2\big]},
\]
where $h' = \frac{dh}{dr}$ and $h'' = \frac{d^2 h} { dr^2}$.  Set  $y(r) = - \frac{1}{1+(h')^2}$.  Then $-1\le y\le 0$ and $y$ solves
\[	
	y' + \frac{n-2}{r} y + \frac{n-2}{r} = \frac{r R}{2(n-1)}.
\]
If $R\equiv 0$, for some constant $C_1\ge 0$, we have
\[
	y = C_1 r^{2-n} - 1.
\]
Therefore, for $r > (C_1)^{1/(n-2)}$,
\[
	(h')^2 = \frac{1}{1-C_1 r^{2-n}} - 1 = \frac{ C_1 r^{2-n} }{ 1- C_1 r^{2-n}}= \frac{C_1}{ r^{n-2}-C_1}.
\]
Then,
\[
	h(r)= \sqrt{C_1} \int  \frac{1}{\sqrt{r^{n-2} - C_1}} \, dr.
\]
Solving the integral, we have, for some constant $C_0$,
\begin{align*}
	h(r) &= C_0 + \sqrt{ 4 C_1 (r - C_1)} \qquad  &&\mbox{if } n =3,\\
	h(r) &= C_0 + \sqrt{C_1} \ln ( r + \sqrt{r^2- C_1} )\qquad &&\mbox{if } n =4,\\
	h(r) &= C_0 + O(r^{2-\frac{n}{2}}) \quad \mbox{for $r \gg 1$} \qquad &&\mbox{if } n \ge 5.
\end{align*}
By computing the mass directly, we have
\[
	m = \frac{1}{2(n-1)\omega_{n-1}}\lim_{r\to \infty} \int_{S_r} \frac{(n-1) (h')^2}{ r(1+ (h')^2)} \, d\sigma = \frac{C_1}{2}.
\]
It is straightforward to check that if $m> 0$, $h'(r) \to \infty$ as $r\to (2m)^{1/(n-2)}$, and the graph of $h$ over $\p B_{(2m)^{1/(n-2)}}$ is the minimal $(n-1)$-hypersurface in the graph of $h$.
\end{proof}

\begin{notation}
For a hypersurface, we denote by $A_{ij}$ the second fundamental form, by $A^i_j = \sum_k g^{ik} A_{kj}$ the shape operator where $g^{ik}$ is the inverse of the induced metric, by $H$ the mean curvature, and by $R$ the scalar curvature. If the hypersurface is the graph of $u$, we compute $A_{ij}$ with respect to the \emph{upward} unit normal vector and we can write the above quantities as the functions of $Du$ and $D^2 u$. We may also suppress the arguments when the context is clear. 
\begin{align*}
	g^{ik}(Du)  &= \left(\delta_{ik} - \frac{u_i u_k}{1+|Du|^2} \right)\\
	A_{ij} (Du, D^2 u )&= \frac{u_{ij}}{\sqrt{1+|Du|^2}}\\
	A^i_j (Du, D^2u) &=\sum_k \left(\delta_{ik} - \frac{u_i u_k}{1+|Du|^2} \right) \frac{u_{kj}}{\sqrt{1+|Du|^2}}\\
	H(Du, D^2 u) & = \sum_{i,j} \left(\delta_{ij} - \frac{u_i u_j}{1+|Du|^2} \right) \frac{u_{ij}}{\sqrt{1+|Du|^2}}\\
	R(Du, D^2 u) & = H^2(Du, D^2 u) - \sum_{i,j} A^i_j(Du, D^2 u) A^j_i(Du, D^2 u).
\end{align*}
\end{notation}

\begin{prop} \label{pr:strict-ellipticity}
Let $h$ be the graphing function of the Schwarzschild solution of $m>0$ in Proposition~\ref{pr:Schwarzschild}. Then the matrix $\big( H g^{ij} - \sum_k A^i_k g^{kj}\big)$ of the graph of $h$ is positive definite everywhere in $\mathbb{R}^n \setminus B_{(2m)^{1/(n-2)}}$.
\end{prop}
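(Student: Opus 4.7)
The plan is to work in a principal orthonormal frame adapted to the rotational symmetry of $h$. At any point $x \in \mathbb{R}^n\setminus B_{(2m)^{1/(n-2)}}$, split the tangent space to the graph into the radial direction and the $(n-1)$-dimensional subspace tangent to the level set $\{|y|=r\}$. In such a frame $g^{ij}=\delta^{ij}$ and the shape operator is diagonal with principal curvatures
\[
\lambda_1 = \frac{h''(r)}{(1+h'(r)^2)^{3/2}}, \qquad \lambda_2=\cdots=\lambda_n = \frac{h'(r)}{r\sqrt{1+h'(r)^2}}.
\]
Consequently the matrix $H g^{ij}-\sum_k A^i_k g^{kj}$ is diagonal with entries $H-\lambda_i$, and positive definiteness reduces to the two inequalities
\[
H-\lambda_1 = (n-1)\lambda_2 > 0 \quad\text{and}\quad H-\lambda_2 = \lambda_1+(n-2)\lambda_2 > 0.
\]

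The first inequality is immediate from the explicit formulas in Proposition~\ref{pr:Schwarzschild}: since $h$ is strictly increasing on $\mathbb{R}^n\setminus \overline{B}_{(2m)^{1/(n-2)}}$, one has $h'(r)>0$ there, hence $\lambda_2>0$. For the second inequality I would not compute $\lambda_1$ directly, but rather exploit the fact that $R\equiv 0$ for the Schwarzschild graph. Writing $R = H^2 - \sum_{i,j} A^i_j A^j_i = 2\sum_{i<j}\lambda_i\lambda_j$ in the principal frame and using the rotational symmetry gives
\[
R = (n-1)\lambda_2\bigl[\,2\lambda_1+(n-2)\lambda_2\,\bigr].
\]
Setting $R=0$ and using $\lambda_2>0$ forces the rigid identity $\lambda_1 = -\tfrac{n-2}{2}\lambda_2$, and therefore
\[
H-\lambda_2 = \lambda_1+(n-2)\lambda_2 = \tfrac{n-2}{2}\,\lambda_2 > 0.
\]

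I do not expect any serious obstacle: once the principal-frame reduction is made, the argument is purely algebraic, and the only mildly delicate point is to confirm the sign $h'>0$ on the exterior region (which fixes the sign of $\lambda_2$ and makes the rigid relation from $R=0$ yield the correct sign of $\lambda_1+(n-2)\lambda_2$). Notice also that the argument actually shows a bit more, namely that the eigenvalues of $Hg^{ij}-\sum_k A^i_k g^{kj}$ are $(n-1)\lambda_2$ and $\tfrac{n-2}{2}\lambda_2$ with multiplicities $1$ and $n-1$; in particular they are strictly positive and bounded away from zero on every compact subset of $\mathbb{R}^n\setminus \overline{B}_{(2m)^{1/(n-2)}}$, a quantitative form that will likely be what is needed for the maximum-principle arguments later in the paper.
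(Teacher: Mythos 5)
Your proof is correct, and it follows the same basic route as the paper's: reduce to the symmetric problem of showing $(H\delta^i_k-A^i_k)$ positive definite, diagonalize in a principal frame, and use that a rotationally symmetric graph has principal curvatures $\lambda_1$ (radial) and $\lambda_2=\frac{h'}{r\sqrt{1+(h')^2}}$ with multiplicity $n-1$. The one genuine difference is how the sign of $H-\lambda_2=\lambda_1+(n-2)\lambda_2$ is obtained. The paper computes both principal curvatures explicitly from $(h')^2=\frac{2m}{r^{n-2}-2m}$, finding $\lambda_1=-\frac{n-2}{2}\sqrt{2m}\,r^{-n/2}$ and $\lambda_2=\sqrt{2m}\,r^{-n/2}$, and reads off the bound $(H\delta^i_k-A^i_k)\ge\frac{n-2}{2}\sqrt{2m}\,r^{-n/2}I$. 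You instead extract the rigid relation $\lambda_1=-\frac{n-2}{2}\lambda_2$ from $R=2\sum_{i<j}\lambda_i\lambda_j=(n-1)\lambda_2\bigl[2\lambda_1+(n-2)\lambda_2\bigr]=0$ together with $\lambda_2>0$; your identity is correct, and this is slightly cleaner and shows the conclusion holds for any rotationally symmetric scalar-flat graph with $h'>0$, at the cost of leaning on $R\equiv 0$ rather than the closed-form solution. One caveat on your closing remark: the quantitative form actually used later (in the global maximum principle, Theorem~\ref{th:global-maximum}) is the explicit decay rate $\frac{n-2}{2}\sqrt{2m}\,r^{-n/2}$ on all of $\mathbb{R}^n\setminus B_{(2m)^{1/(n-2)}}$, needed to absorb an error term $o\left(r^{(-3n+2)/4}\right)$ as $r\to\infty$; ``bounded away from zero on compact subsets'' is not enough there, so you should substitute $\lambda_2=\sqrt{2m}\,r^{-n/2}$ into your eigenvalue formulas to recover the paper's uniform lower bound.
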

\begin{proof}
It suffices to show that $(H \delta^i_k - A^i_k)$ is positive definite, because $H g^{ij} - \sum_k A^i_k g^{kj} =\sum_k (H \delta^i_k - A^i_k) g^{kj} $ and $(g^{kj})$ is positive definite. By rotating coordinates, we can assume that $A^i_k = \mbox{diag}(\lambda_1, \dots, \lambda_n)$ where $\lambda_l$ are the principle curvatures. For the graph of a rotationally symmetric function $h(r)$, the principle curvatures are 
\[
	\frac{h''}{(1+(h')^2)^{3/2}}, \quad \mbox{and} \quad \frac{h'}{r\sqrt{1+(h')^2}}  \mbox{ with multiplicity $(n-1)$}.
\]
Therefore, the principle curvatures of the Schwarzschild solution are
\[
	-\frac{n-2}{2} \sqrt{2m} r^{-\frac{n}{2}}, \quad \mbox{and} \quad \sqrt{2m} r^{-\frac{n}{2}} \mbox{ with multiplicity $(n-1)$}.
\]
Hence, $(H \delta^i_k - A^i_k) \ge \frac{n-2}{2} \sqrt{2m} r^{-\frac{n}{2}} I$ where $I$ is the $n\times n$ identity matrix, so it is positive definite  everywhere in $\mathbb{R}^n \setminus B_{(2m)^{1/(n-2)}}$.
\end{proof}

The following two propositions were proven in our earlier paper \cite{Huang-Wu}. They play  important roles to prove the mean convexity of the asymptotically flat graphs and to derive the ellipticity of the linearized scalar curvature operator.

\begin{prop}[{\cite [Proposition 2.1]{Huang-Wu}}]\label{pr:id}
  Let $B = (b_{ij})$ be an $n \times n$ matrix with $n \ge 2$. Denote
\begin{align*}
    &\sigma_1(B) = \sum_{i=1}^n b_{ii}, \quad \sigma_1(B|k) = \bigg(\sum_{i=1}^n b_{ii}\bigg) - b_{kk},\\
    &\sigma_2(B) = \sum_{1 \le i < j \le n} (b_{ii} b_{jj} - b_{ij}b_{ji}).
\end{align*}
  For each $1 \le k \le n$, we have
  \begin{equation*} \label{eq:id1}
    \begin{split}
    \sigma_1(B) \sigma_1(B|k) 
    & = \sigma_2(B) + \frac{n}{2(n-1)} \left(\sigma_1(B|k)\right)^2 + \sum_{1 \le i < j \le n} b_{ij} b_{ji} \\
    & \quad + \frac{1}{2(n-1)} \sum_{1 \le i < j \le n \atop i\neq k, j\neq k } (b_{ii} - b_{jj})^2,
    \end{split}
  \end{equation*}
  where the last term is zero when $n = 2$.
  In particular, if $B$ is real and $b_{ij}b_{ji} \ge 0$ for all $1 \le i< j \le n$, then
  \[
     \sigma_1(B) \sigma_1(B|k) \ge \sigma_2(B) + \frac{n}{2(n-1)} \left( \sigma_1(B|k) \right)^2
  \]
  with equality if and only if $b_{ii}$ are equal for all $i = 1, \dots, n$ and $i\neq k$, and $b_{ij} b_{ji} = 0$ for all $i,j = 1,\dots, n$ and $i \ne j$.
\end{prop}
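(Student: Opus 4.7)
The statement is a purely algebraic identity about the elementary symmetric functions of a matrix, so the plan is to prove it by direct expansion and comparison of both sides, with no tricks needed beyond careful bookkeeping. Let me abbreviate $s := \sigma_1(B|k) = \sum_{i\neq k} b_{ii}$, so that $\sigma_1(B) = s + b_{kk}$ and the left-hand side equals $s^2 + b_{kk}\,s$. The target is therefore to rewrite the right-hand side in the form $s^2 + b_{kk}\,s$.

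The first step is to combine the $\sigma_2$ term with the off-diagonal product sum, using the elementary identity
\[
\sigma_2(B) + \sum_{1\le i<j\le n} b_{ij}b_{ji} \;=\; \sum_{1\le i<j\le n} b_{ii}b_{jj},
\]
which isolates the diagonal interactions. I would then split this new sum according to whether $k$ appears as an index, writing
\[
\sum_{i<j} b_{ii}b_{jj} \;=\; b_{kk}\,s \;+\; \sum_{i<j,\; i,j\neq k} b_{ii}b_{jj},
\]
and use $(\sum_{i\neq k} b_{ii})^2 = \sum_{i\neq k} b_{ii}^2 + 2\sum_{i<j,\,i,j\neq k} b_{ii}b_{jj}$ to re-express $\sum_{i<j,\,i,j\neq k} b_{ii}b_{jj}$ in terms of $s^2$ and $\sum_{i\neq k} b_{ii}^2$. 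The second step is to expand the squared-differences term analogously:
\[
\sum_{\substack{i<j\\ i,j\neq k}} (b_{ii}-b_{jj})^2 \;=\; (n-2)\sum_{i\neq k} b_{ii}^2 \;-\; 2\sum_{\substack{i<j\\ i,j\neq k}} b_{ii}b_{jj} \;=\; (n-1)\sum_{i\neq k} b_{ii}^2 \;-\; s^2.
\]
Substituting this in, the sums $\sum_{i\neq k} b_{ii}^2$ cancel between the two contributions, and the coefficients of $s^2$ collapse $\tfrac{1}{2} + \tfrac{n}{2(n-1)} - \tfrac{1}{2(n-1)} = 1$, leaving exactly $b_{kk}\,s + s^2$.

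The only real pitfall is the bookkeeping of which pairs $(i,j)$ involve the index $k$ and which do not; I would take care to split $\sigma_2(B)$ and $\sum_{i<j} b_{ij}b_{ji}$ consistently into "pairs touching $k$" and "pairs disjoint from $k$" before combining, to avoid double-counting. No analytic ingredient is needed; the whole argument is a finite polynomial identity in the entries of $B$.

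Finally, the \emph{in particular} part is immediate from the identity: under the hypothesis $b_{ij}b_{ji}\ge 0$ for $i<j$, the term $\sum_{i<j} b_{ij}b_{ji}$ is non-negative, and the term $\sum_{i<j,\,i,j\neq k}(b_{ii}-b_{jj})^2$ is manifestly non-negative, giving the desired inequality. Equality forces both non-negative terms to vanish, i.e.\ $b_{ij}b_{ji}=0$ for all $i<j$ (equivalently for all $i\neq j$) and $b_{ii}=b_{jj}$ for all $i,j\neq k$, which is exactly the equality characterization stated.
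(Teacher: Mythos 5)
Your computation is correct and complete: the identity $\sigma_2(B)+\sum_{i<j}b_{ij}b_{ji}=\sum_{i<j}b_{ii}b_{jj}$, the splitting off of the index $k$, and the reductions $P=\tfrac{1}{2}(s^2-T)$ and $\sum_{i<j,\,i,j\neq k}(b_{ii}-b_{jj})^2=(n-1)T-s^2$ all check out, and the coefficients of $s^2$ do collapse to $1$, while the equality discussion follows correctly since the two discarded terms are each non-negative. The paper itself gives no proof here (it cites Proposition 2.1 of the earlier Huang--Wu paper), and your direct polynomial expansion is exactly the standard argument one would expect for this finite algebraic identity.
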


\begin{notation}
Let $N$ be a (piece of) hypersurface in Euclidean space, and  let $\mu$ be a unit normal vector field to $N$. The mean curvature of $N$ defined by $\mu$ is given by 
\[
	H_N = - \mbox{div}_0 \mu,
\] 
where $\mbox{div}_0$ is the Euclidean divergence operator. (The $n$-dimensional sphere of radius $r$ has mean curvature $n/r$ with respect to the inward unit normal vector by this convention.) We denote by $\langle \cdot, \cdot \rangle$ the standard metric on Euclidean space.  With a slight abuse of notation, we may view $\eta$ as a vector in $\mathbb{R}^n$, as well as a vector in $\mathbb{R}^{n+1}$ by letting the last component be zero.
\end{notation} 
\begin{prop}[{\cite[Theorem 2.2]{Huang-Wu}}]\label{pr:HHR}
Let $M$ be a $C^{2}$ hypersurface in $\mathbb{R}^{n+1}$. Consider the height function $h : M \rightarrow \mathbb{R}$ given by $ h(x^1, \dots, x^{n+1})= x^{n+1}$. Let $a$ be a regular value of $h$. Denote by
\[
	\Sigma = M \cap \{ x^{n+1} = a\},
\] 
which is a $C^{2}$ hypersurface in $\{ x^{n+1} = a\}$ and $|\nabla^M h| > 0$ at every point in $\Sigma$. Denote by $\nu$ and $\eta$ the unit normal vector fields to $M \subset \mathbb{R}^{n+1}$ and $\Sigma \subset \{ x^{n+1} = a\}$, respectively, and denote by $H$ and $H_{\Sigma}$ the mean curvatures of $M\subset \mathbb{R}^{n+1}$ and $\Sigma \subset \{ x^{n+1} = a\}$ defined by $\nu$ and $\eta$, respectively. Let $R$ be the induced scalar curvature of $M$. Then, at every point of $\Sigma$, 
\begin{equation*} \label{eq:HHR}
	  \langle \nu, \eta \rangle H H_{\Sigma} \ge \frac{R}{2} + \frac{n}{2(n-1)} \langle \nu, \eta \rangle^2 H_{\Sigma}^2
\end{equation*}
with the equality at a point in $\Sigma$ if and only if 
$(M, \Sigma)$ satisfies the following two conditions at the point:
\begin{enumeratei}
  \item \label{it:S} $\Sigma \subset \mathbb{R}^n$ is umbilic, with the principal curvature $\kappa$;
  \item \label{it:M} $M \subset \mathbb{R}^{n+1}$ has at most two distinct principal curvatures, and one of them is equal to $\langle \nu, \eta \rangle \kappa$, with multiplicity at least $n-1$.
\end{enumeratei}
\end{prop}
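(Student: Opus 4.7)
The plan is to reduce the geometric inequality to the algebraic identity of Proposition~\ref{pr:id} applied to the shape operator of $M$, with the distinguished index $k=n$ corresponding to the direction in $T_pM$ normal to $T_p\Sigma$. Fix $p \in \Sigma$ and choose an orthonormal frame $\{e_1,\dots,e_n\}$ of $T_pM$ with $\{e_1,\dots,e_{n-1}\}$ an orthonormal basis of $T_p\Sigma$; here $e_n \in T_pM$ is the unit vector orthogonal to $T_p\Sigma$, which exists because $a$ is a regular value of $h$. The four unit vectors $\nu$, $e_n$, $\eta$, $\partial_{n+1}$ all lie in the same $2$-plane $P \subset \mathbb{R}^{n+1}$, namely $T_p\Sigma^\perp$, and the pairs $\{e_n,\nu\}$ and $\{\eta,\partial_{n+1}\}$ form two orthonormal bases of $P$ related by a rotation.

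The main geometric step is the identity
\[
\sigma_1(A|n) := \sum_{i=1}^{n-1} A_{ii} = \langle\nu,\eta\rangle\, H_\Sigma,
\]
where $A_{ij} = -\langle D_{e_i}\nu, e_j\rangle$ is the shape operator of $M$ in the chosen frame and $D$ is the Euclidean connection. Because $\partial_{n+1}$ is everywhere perpendicular to $\{x^{n+1}=a\}$ and $\eta(q) \perp T_q\Sigma$, the pair $\{\eta(q),\partial_{n+1}\}$ is an orthonormal basis of $T_q\Sigma^\perp$ for every $q \in \Sigma$ near $p$, so one may decompose
\[
\nu(q) = \langle\nu(q),\eta(q)\rangle\,\eta(q) + \langle\nu(q),\partial_{n+1}\rangle\,\partial_{n+1}.
\]
Differentiating along a curve in $\Sigma$ with initial velocity $e_\alpha$, and using that $\partial_{n+1}$ is Euclidean-parallel while $\langle\eta,e_\beta\rangle = \langle\partial_{n+1},e_\beta\rangle = 0$ for $\beta \le n-1$, the cross terms drop out and one gets $A_{\alpha\beta} = \langle\nu,\eta\rangle\, B_{\alpha\beta}$ for all $1\le \alpha,\beta \le n-1$, where $B_{\alpha\beta} = -\langle D_{e_\alpha}\eta,e_\beta\rangle$ is the shape operator of $\Sigma$ in the slice. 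Tracing yields the claimed identity.

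With this in hand, I would apply Proposition~\ref{pr:id} to the symmetric matrix $(A_{ij})$ with $k=n$; symmetry gives $A_{ij}A_{ji} = A_{ij}^2 \ge 0$, so the hypothesis is satisfied. Combined with the Gauss equation $R = H^2 - |A|^2 = 2\sigma_2(A)$ and $H = \sigma_1(A)$, the conclusion of Proposition~\ref{pr:id} translates immediately into
\[
\langle\nu,\eta\rangle\, H H_\Sigma \ge \frac{R}{2} + \frac{n}{2(n-1)}\langle\nu,\eta\rangle^2 H_\Sigma^2.
\]
For the equality case, Proposition~\ref{pr:id} forces $A_{ij}=0$ for $i \ne j$ and $A_{11} = \cdots = A_{n-1,n-1} =: \lambda$. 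Via $A_{\alpha\beta} = \langle\nu,\eta\rangle B_{\alpha\beta}$, this makes $B$ a scalar multiple of the identity on $T_p\Sigma$, so $\Sigma$ is umbilic at $p$ with principal curvature $\kappa = \lambda/\langle\nu,\eta\rangle$, giving (i); simultaneously $M$ is diagonal in the chosen frame with principal curvature $\lambda = \langle\nu,\eta\rangle\kappa$ of multiplicity at least $n-1$ and one further principal curvature $A_{nn}$, giving (ii). The only delicate point is the sign bookkeeping in $A_{\alpha\beta} = \langle\nu,\eta\rangle B_{\alpha\beta}$ and in confirming that it holds regardless of the sign of $\langle\nu,\eta\rangle$; once that is set, the rest is purely algebraic.
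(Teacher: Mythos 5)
Your argument is correct and is essentially the proof given in the cited reference \cite{Huang-Wu}: the identity $\sum_{\alpha=1}^{n-1}A_{\alpha\alpha}=\langle\nu,\eta\rangle H_{\Sigma}$ obtained from the decomposition of $\nu$ in the $2$-plane $(T_p\Sigma)^{\perp}$ is exactly what lets Proposition~\ref{pr:id} (with $k=n$, the direction in $T_pM$ normal to $T_p\Sigma$) deliver both the inequality and the equality characterization, with $\langle\nu,\eta\rangle\neq 0$ guaranteed since $a$ is a regular value of $h$. The sign bookkeeping you flag is harmless because both the identity $A_{\alpha\beta}=\langle\nu,\eta\rangle B_{\alpha\beta}$ and the stated inequality are invariant under $\eta\mapsto-\eta$.
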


\section{Proof of Theorem~\ref{th:mean-convexity}}

\begin{notation}
Let $M$ be a hypersurface in Euclidean space and let $\textup{int}(M)$ be the set of \emph{interior} points in $M$, i.e., $\textup{int}(M) = M \setminus \p M$. The set of \emph{interior geodesic points} is given by
\begin{equation} \label{eq:defM0}
	M_0 = \{ p \in \textup{int}(M): (A^i_j)= 0 \mbox{ at } p\}.
\end{equation}
\end{notation}

A classical result of R. Sacksteder~\cite[Lemma 6]{Sacksteder} characterizes the set of geodesic points. While he proved the statement for complete hypersurfaces, the statement can be easily generalized to hypersurfaces with boundary,  see also \cite[Lemma 3.6]{Huang-Wu} and \cite[Lemma 4.5]{Huang-Wu-sphere}. 
\begin{lemma}\label{le:geodesic}
Let $M$ be a $C^{n+1}$ hypersurface in $\mathbb{R}^{n+1}$, and let $M_0'$ be a connected component of $M_0$. Then $M_0'$ lies in a hyperplane which is tangent to $M$ at every point in $M_0'$.
\end{lemma}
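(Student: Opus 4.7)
The plan is a connectedness argument. I would fix a basepoint $p_0 \in M_0'$, let $\nu_0 = \nu(p_0)$ be the unit normal there, and let $\Pi := p_0 + T_{p_0}M$ be the corresponding tangent affine hyperplane. The candidate hyperplane is exactly $\Pi$, so I would show that the set
\[
E := \{\,q \in M_0' : q \in \Pi \text{ and } \nu(q) = \nu_0\,\}
\]
equals all of $M_0'$. Nonemptiness ($p_0 \in E$) and closedness of $E$ in $M_0'$ are immediate from continuity of the position map and of the Gauss map; thus by connectedness of $M_0'$ the whole argument reduces to showing $E$ is open in $M_0'$.

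To verify openness, I fix $q_0 \in E$ and choose Euclidean coordinates in which $q_0 = 0$ and $\nu_0 = e_{n+1}$, so that locally $M$ is the graph of a $C^{n+1}$ function $f : B_r \subset \mathbb{R}^n \to \mathbb{R}$ with $f(0)=0$, $\nabla f(0)=0$, and $D^2 f(0) = 0$ (the last coming from $A_{q_0}=0$). Under this graph representation, a neighborhood of $q_0$ in $M_0'$ corresponds to the connected component $S_0$ through the origin of the zero set $S = \{\,x \in B_r : D^2 f(x) = 0\,\}$, and the openness assertion translates into: $f \equiv 0$ and $\nabla f \equiv 0$ on $S_0$.

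The driving observation is that the condition $D^2 f = 0$ forces $\nabla f$ to be constant along $S_0$. Indeed, for any Lipschitz arc $\gamma : [0,1] \to S_0$ the chain rule gives, almost everywhere,
\[
\frac{d}{dt}\nabla f(\gamma(t)) = D^2 f(\gamma(t))\,\dot\gamma(t) = 0,
\]
so $\nabla f \equiv \nabla f(0) = 0$ along $\gamma$, whence $\frac{d}{dt}f(\gamma(t)) = \nabla f(\gamma(t)) \cdot \dot\gamma(t) \equiv 0$ and $f \equiv f(0) = 0$ along $\gamma$. The problem thus reduces to a purely local question about the zero set $S$: joining the origin to each nearby point of $S_0$ by a Lipschitz arc lying inside $S_0$. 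Here the $C^{n+1}$ regularity of $f$ (i.e.\ of $M$) enters decisively, via Sacksteder's classical analysis \cite{Sacksteder} (and the adaptations developed in \cite{Huang-Wu, Huang-Wu-sphere}), which uses the higher smoothness to establish the needed local structure of the component $S_0$.

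The main obstacle is precisely the construction of such admissible arcs inside $S_0$. In full generality, the zero set of a smooth function can have connected components with very poor path properties (topologist's-sine-curve-type pathologies, totally disconnected slices, and so on), so the argument must exploit the specific structure of the equation $D^2 f = 0$ together with the $C^{n+1}$ smoothness to rule such behavior out. Once the local path-connectedness of $S_0$ by Lipschitz arcs is established, the propagation of $f = 0$ and $\nabla f = 0$ from the origin is automatic, and the open-closed dichotomy closes the proof.
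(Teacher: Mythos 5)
Your open--closed skeleton is a sensible way to organize the statement, but as written the proposal concentrates the entire mathematical content of the lemma into the one step you explicitly leave open, so it is not a proof. The reduction ``$D^2 f=0$ on $S_0$, integrate along Lipschitz arcs in $S_0$, conclude $Df\equiv 0$ and $f\equiv 0$'' is the trivial part; the whole difficulty is that a connected component of the zero set of a continuous map (here $x\mapsto D^2f(x)$) need not be path-connected, let alone connected by rectifiable or Lipschitz arcs, and you offer no mechanism by which the equation $D^2f=0$ together with $C^{n+1}$ regularity would manufacture such arcs. This is not merely a presentational gap: Sacksteder's argument, to which you defer, does not proceed by constructing arcs inside the geodesic set at all --- it circumvents path-connectedness --- so the route you commit to (``establish local Lipschitz path-connectedness of $S_0$'') is not the one known to work, and there is no reason to expect that auxiliary claim to be true. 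A secondary problem: even granting it, your openness step identifies ``a neighborhood of $q_0$ in $M_0'$'' with the local component $S_0$ of $\{D^2f=0\}$ through the origin. In general $M_0'\cap U$ may contain points arbitrarily close to $q_0$ that lie in other components of $M_0\cap U$ (connected to $q_0$ within $M_0'$ only through chains leaving $U$), so constancy of $f$ and $Df$ on $S_0$ alone does not show that your set $E$ is open in $M_0'$.

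For comparison, the paper does not prove this lemma either: it quotes it as a classical result, citing \cite[Lemma 6]{Sacksteder} and remarking that the extension from complete hypersurfaces to hypersurfaces with boundary is routine, with adapted statements in \cite[Lemma 3.6]{Huang-Wu} and \cite[Lemma 4.5]{Huang-Wu-sphere}. If your intention is likewise to invoke Sacksteder, it is cleaner to cite the lemma as a black box than to present an outline whose pivotal step is attributed to him but does not match what his proof establishes. If your intention is a self-contained argument, the missing step --- where the $C^{n+1}$ hypothesis and the structure of the vanishing second fundamental form must actually be used --- still has to be supplied.
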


To prove Theorem~\ref{th:mean-convexity}, let us recall the following results in~\cite{Huang-Wu}. 


\begin{lemma}[{\cite[Proposition 3.1]{Huang-Wu}}] \label{le:nonempty}
Let $W$ be an open subset in $\mathbb{R}^n$, not necessarily bounded. Let $p\in \p W$, and denote by $B(p)$ an open ball in $\mathbb{R}^n$ centered at $p$. Suppose $f \in C^2(  W\cap B(p)) \cap C^1( \overline{W} \cap B(p))$ satisfies 
\begin{align*}
	& H(Df, D^2 f) \ge 0\quad &&\mbox{in } W  \cap B(p)   \\
	&f= c, \; |Df|=0\quad &&\mbox{on } \p W  \cap B(p),
\end{align*}	
for some constant $c$. Then either $f\equiv c$ in $W  \cap B(p)$, or
\[
	\{ x \in  W  \cap B(p): f(x) > c \} \neq \emptyset.
\]	
\end{lemma}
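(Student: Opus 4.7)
The plan is to argue by contradiction: assume $f \not\equiv c$ on $W \cap B(p)$ yet $f \le c$ throughout, and derive a contradiction by combining the strong minimum principle with Hopf's boundary lemma. Set $u = c - f \ge 0$; a direct computation using the oddness of $D^2 u$ under $u \mapsto -u$ and the evenness of the coefficients in $Du$ shows that $H(Du, D^2 u) = -H(Df, D^2 f) \le 0$, so
\[
  \sum_{i,j} a^{ij}(x)\, u_{ij}(x) \le 0, \qquad a^{ij}(x) = \frac{1}{\sqrt{1+|Du(x)|^2}}\left( \delta_{ij} - \frac{u_i(x) u_j(x)}{1+|Du(x)|^2} \right).
\]
Since $u \in C^1(\overline{W} \cap B(p))$, the coefficients $a^{ij}$ are continuous and the operator is uniformly elliptic on every compact subdomain, while the boundary hypotheses translate to $u = 0$ and $|Du| = 0$ on $\partial W \cap B(p)$.

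Next, I would pick a connected component $U$ of $W \cap B(p)$ on which $u$ is not identically zero. By the strong minimum principle for the nonnegative supersolution $u$ on the connected open set $U$, we get $u > 0$ throughout $U$. A short topological observation then ensures that $\partial U \cap \partial W \cap B(p) \ne \emptyset$: if instead $\partial U$ were contained in $\partial B(p)$, connectedness of $B(p)$ would force $U = B(p)$, contradicting $p \in \partial W \setminus U$; and any boundary point of $U$ lying inside $B(p)$ must belong to $\partial W$, since otherwise it would lie in $W$ and, being approachable from $U$, already belong to $U$.

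The final step is Hopf's boundary lemma applied on an auxiliary ball, for which the interior sphere condition is automatic. Fix $y_0 \in \partial U \cap \partial W \cap B(p)$ and choose $x_0 \in U$ close enough to $y_0$ that $r_0 := \mathrm{dist}(x_0, \partial U) < \mathrm{dist}(x_0, \partial B(p))$. The maximal inscribed ball $B_{r_0}(x_0) \subset U$ then touches $\partial U$ at a point $y \in B(p) \cap \partial W$, and Hopf's lemma applied to the positive supersolution $u$ on $B_{r_0}(x_0)$ at the boundary point $y$ yields $\partial u / \partial \nu (y) < 0$ for the outward radial direction $\nu$, contradicting $|Du(y)| = 0$. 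The trickiest aspect, namely the potentially irregular geometry of $\partial W$, is circumvented by this inscribed-ball reduction, so that only the interior sphere condition on the ball itself is needed rather than any regularity of $\partial W$ near $p$.
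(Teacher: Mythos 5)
Your proof is correct and takes essentially the same route as the cited source for this lemma (the paper itself quotes \cite[Proposition 3.1]{Huang-Wu} without reproving it): rewrite the mean curvature inequality as a linear, locally uniformly elliptic inequality for $c-f$ with coefficients evaluated along $Df$, apply the strong minimum principle on a component where $f\not\equiv c$, and contradict $|Df|=0$ on $\partial W\cap B(p)$ via Hopf's lemma on an inscribed ball. The inscribed-ball reduction correctly sidesteps any regularity of $\partial W$, and the topological step locating a boundary point of the component inside $B(p)$ is sound.
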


\begin{thm}[{\cite[Theorem 3.9]{Huang-Wu}}] \label{th:zero}
Let $W$ be a bounded open subset in $\mathbb{R}^n$ and let $N$ be an open neighborhood of $\partial W$.  If $f\in C^{n+1}(\overline{W} \cap N)$, $f=c, |Df|=0, |D^2 f| = 0$ on $\partial W$, and the scalar curvature of the graph of $f$ is non-negative, then $f\equiv c$ on $W\cap N$.
\end{thm}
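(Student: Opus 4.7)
My approach is by contradiction, combining the one-sided maximum principle of Lemma~\ref{le:nonempty}, Sacksteder's rigidity of the geodesic locus (Lemma~\ref{le:geodesic}), and the scalar-curvature identity on height-function level sets (Proposition~\ref{pr:HHR}). Suppose, after possibly replacing $f$ by $-f$, that $f \not\equiv c$ on $W \cap N$; then, restricting to a small ball $B$ centered at some $x_0 \in \partial W$, I may assume the set $\{x \in W \cap B : f(x) > c\}$ has $x_0$ as an accumulation point. It suffices to derive a contradiction in this local setting, since propagating along $\partial W$ and noting that (after shrinking $N$) every connected component of $W \cap N$ meets $\partial W$ then yields $f \equiv c$ on all of $W \cap N$.

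The first step is to establish a definite sign for the mean curvature $H$ of the graph $M = \text{graph}(f)$ in $W \cap B$. Applying Proposition~\ref{pr:HHR} to the level sets $\Sigma_a = M \cap \{x^{n+1} = a\}$ for $a$ slightly above $c$ and using $R \ge 0$, I obtain
\[
	\langle \nu, \eta\rangle H H_\Sigma \;\ge\; \tfrac{n}{2(n-1)} \langle\nu,\eta\rangle^2 H_\Sigma^2 \;\ge\; 0.
\]
A Taylor analysis near $\partial W$ in the $C^{n+1}$ category, using $f = c$ and $|Df| = |D^2 f| = 0$ on $\partial W$, shows that for $a$ close to $c$ each $\Sigma_a$ lies close to a small inward-translated copy of $\partial W$; a perturbation argument together with the positivity of the right-hand side pins down a definite sign for $H$, say $H \ge 0$, throughout $W \cap B$. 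Lemma~\ref{le:nonempty} then yields the dichotomy: either $f \equiv c$ on $W \cap B$, giving the conclusion immediately, or $\{f > c\} \neq \emptyset$ on $W \cap B$.

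It remains to dispose of the second case. The vanishing of the full shape operator on $\partial M$ means that every boundary point of $M$ is a limit of interior geodesic points; let $M_0'$ denote the connected component of $M_0$ whose closure meets $\partial M$ above $x_0$. By Lemma~\ref{le:geodesic}, $M_0'$ lies in an affine hyperplane tangent to $M$, and since $Df = 0$ on $\partial W$ this hyperplane must be horizontal, namely $\{x^{n+1} = c\}$. The $C^{n+1}$ regularity together with the higher-order vanishing of $f$ on $\partial W$ is then used to show that $M_0'$ projects onto an open set of $W$ adjacent to $\partial W$, forcing $f \equiv c$ there and contradicting $\{f > c\} \neq \emptyset$.

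The main obstacle is the first step. In the asymptotically flat setting of Theorem~\ref{th:mean-convexity}, the sign of $H$ comes from decay at infinity; here one must instead exploit only the strong boundary vanishing to extract a sign for $H_\Sigma$ on the collapsing level sets $\Sigma_a$ as $a \searrow c$. Executing this carefully in the $C^{n+1}$ setting, so that Proposition~\ref{pr:HHR} actually forces $H$ to be one-signed near $\partial W$, is the technical heart of the proof.
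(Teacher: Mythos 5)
First, a point of comparison: the paper does not prove this statement at all --- it is imported verbatim from \cite{Huang-Wu} (Theorem 3.9 there) --- so there is no in-paper proof to measure your argument against. Judged on its own terms, your proposal has several genuine gaps, and you essentially concede this when you describe the first step as "the technical heart" still to be executed.

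The most serious problems are the following. (1) Your mechanism for fixing the sign of $H$ does not work. The level sets $\Sigma_a=\{f=a\}$ for $a$ near $c$ need not lie anywhere near $\partial W$: since $Df$ and $D^2f$ vanish on $\partial W$, the function is extremely flat there, and $\{f=a\}$ can sit deep inside $W\cap N$. Even if those level sets did hug $\partial W$, the set $W$ is an arbitrary bounded open set, so $\partial W$ carries no curvature sign from which to perturb; and the pointwise inequality $\langle\nu,\eta\rangle H H_\Sigma\ge 0$ only correlates the signs of $H$ and $H_\Sigma$, it does not select one. The structural fact you actually need --- and never invoke --- is the algebraic identity behind Proposition~\ref{pr:id}: at an interior point with $H=0$ and $R\ge 0$ one has $|A|^2=H^2-R\le 0$, hence $A=0$, so $H$ can change sign only across the geodesic set $M_0$, whose components lie in hyperplanes by Lemma~\ref{le:geodesic}. (2) Your third step asserts that interior geodesic points accumulate at $\partial M$. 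The vanishing of the shape operator \emph{on} $\partial M$ does not produce interior geodesic points; a priori $M_0$ could be empty near the boundary (for instance if $H>0$ at every interior point), and that is precisely the configuration the theorem must rule out, so you cannot assume a component $M_0'$ exists. (3) Even granting such a component, Lemma~\ref{le:geodesic} only places $M_0'$ inside a hyperplane; it does not make $M_0'$ open, so the claim that it "projects onto an open set of $W$" is unjustified. The contradiction in the analogous arguments of this paper (and of \cite{Huang-Wu}) comes instead from compactness of a regular level set $\Sigma_{c+\epsilon}$: Proposition~\ref{pr:HHR} forces $H_{\Sigma_{c+\epsilon}}\le 0$ everywhere with respect to the inward normal, which is impossible for a closed hypersurface in a hyperplane. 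Your proposal never reaches this mechanism and never uses the boundedness of $W$, which is essential to it. Finally, the appeal to Lemma~\ref{le:nonempty} is circular in your setup: you have already postulated that $\{f>c\}$ accumulates at $x_0$, and the lemma requires the sign of $H$ that you have not yet established.
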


 \begin{thm} \label{th:unbounded}
Let $f\in C^{n+1} (\mathbb{R}^n \setminus \overline{\Omega}) \cap C^0 (\mathbb{R}^n \setminus \Omega)$ and $|Df(x)| \to \infty$ as $x\to \partial \Omega$. Denote by $M$ the graph of $f$.  Suppose $M$ is $C^{n+1}$ up to boundary and has non-negative scalar curvature. Suppose that the mean curvature $H$ of $M$ changes signs. Let $M_+$ be a connected component of $\{p \in M: H \ge 0 \mbox{ at } p\}$ that contains a point of positive mean curvature. Then both $M_+$ and the boundary of each component of $M\setminus M_+$, except $\partial M$, must be unbounded. 

\end{thm}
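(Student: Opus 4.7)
My plan is to prove both assertions by contradiction, using the algebraic identity $R = H^2 - |A|^2$ together with Sacksteder's Lemma~\ref{le:geodesic} and the local maximum principle Lemma~\ref{le:nonempty}. The starting observation is that $R \ge 0$ forces $\{H = 0\} \cap \mathrm{int}(M) \subset M_0$, so $\partial M_+ \cap \mathrm{int}(M) \subset M_0$; hence every connected component $K$ of $\partial M_+ \cap \mathrm{int}(M)$ lies in a single hyperplane $P_K$ tangent to $M$ at each of its points.

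For the first assertion I suppose $M_+$ is bounded and examine the height function $f$ on $M_+$, which attains its maximum $z_0$ at some $p_0$. If $p_0 \in \mathrm{int}(M_+) \setminus \partial M$, then at the projected point $x_0$ one has $Df(x_0) = 0$ and $D^2 f(x_0) \le 0$, while $H(Df, D^2 f) \ge 0$ in a neighborhood; these together force $D^2 f(x_0) = 0$. Applying Lemma~\ref{le:nonempty} with $c = z_0$ and $W = \{f < z_0\} \cap B_\epsilon(x_0)$ then yields $f \equiv z_0$ on a ball around $x_0$, since the alternative $\{f > z_0\} \cap W \ne \emptyset$ is excluded by the choice of $W$. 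Running the same local argument at every point of the level set produces a subset of $\{f = z_0\}$ that is both open and closed in the interior of the projection $W_+$ of $M_+$, so $f \equiv z_0$ on the connected component of $\mathrm{int}(W_+)$ containing $x_0$. This produces a flat neighborhood where $H \equiv 0$, contradicting the existence of a point of $M_+$ with $H > 0$. When the maximum is only attained on $\partial M_+ \cap \mathrm{int}(M) \subset M_0$, the hyperplane $P_K$ through $p_0$ must in fact be horizontal: otherwise $f$ varies linearly and nonconstantly along $K$, but $K \subset \overline{M_+}$ together with $f \le z_0$ on $M_+$ forces $f \le z_0$ on $K$, a contradiction. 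Once horizontality is established, one localizes to the $M_+$-side of $K$ and the same propagation argument applies. The remaining case $p_0 \in \partial M \cap \partial M_+$ is excluded by $|Df| \to \infty$ on $\partial \Omega$, which makes the tangent plane of $M$ vertical at $\partial M$ and incompatible with $p_0$ being a maximum of $f$ on $M_+$.

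For the second assertion, let $U$ be a component of $M \setminus M_+$ and suppose $\partial U \setminus \partial M$ is bounded. Since $\partial M$ is compact, $\partial U$ itself lies in some ball $B_R$. For $R$ large enough the end $M \cap B_R^c$ is connected and disjoint from $\partial U$, hence entirely inside $U$ or entirely inside $M \setminus U$. The first alternative makes $M_+ \subset B_R$ bounded, contradicting the first assertion. The second alternative makes $U$ bounded, in which case a symmetric argument applied to an extremum of $f$ on $\overline{U}$, combined with Sacksteder's placement of the $M_0$-components of $\partial U$ in hyperplanes and the $H \le 0$ analogue of Lemma~\ref{le:nonempty} (obtained by reversing the orientation), produces a flat piece inside $U$ that propagates to its boundary and once more contradicts the first assertion.

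The main obstacle in both parts is handling extrema that sit on the boundary of the bounded region rather than in its interior, because Lemma~\ref{le:nonempty} is phrased in graph form and requires the tangent plane to be horizontal on the projected boundary. The crucial input that makes the argument run is Sacksteder's rigid placement of components of $M_0$ in hyperplanes, which forces horizontality whenever such a component bounds $M_+$ and carries a height extremum of $f|_{M_+}$ --- the tangent plane cannot tilt because a nonzero linear variation of the height along the component would push $K$ outside the extremal range and contradict $K \subset \overline{M_+}$.
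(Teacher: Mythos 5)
Your strategy diverges from the paper's at the decisive point: the paper reduces both assertions to Theorem~\ref{th:zero} (the unique-continuation result of \cite{Huang-Wu}, which needs only $R\ge 0$ together with the vanishing of the graphing function and its first and second derivatives on the boundary of the region), whereas you try to get by with Lemma~\ref{le:nonempty} alone. This substitution breaks down in your second assertion: a bounded component $U$ of $M\setminus M_+$ carries no sign information on $H$ whatsoever --- $U$ may contain entire \emph{other} connected components of $\{H\ge 0\}$, including points of strictly positive mean curvature --- so neither Lemma~\ref{le:nonempty} nor its ``$H\le 0$ analogue'' is applicable on $U$. This is exactly where the full strength of Theorem~\ref{th:zero} is needed: its hypotheses involve only the scalar curvature, at the price of requiring second-order vanishing on $\partial U$, which Sacksteder's Lemma~\ref{le:geodesic} supplies since $\pi^{-1}(\partial U)\setminus\partial M\subset M_0$ lies in a hyperplane tangent to $M$ there.

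The first assertion also has unrepaired boundary cases. Your dismissal of a height maximum on $\partial M$ is incorrect: a vertical tangent plane at a boundary point is perfectly compatible with the height function attaining its maximum there (consider $f(x)=-\sqrt{|x|-1}$ near $|x|=1$: $|Df|\to\infty$ while $f$ attains its maximum on the boundary sphere). In the case where the maximum sits on a component $K$ of $\partial M_+\cap M_0$, your horizontality argument tacitly assumes $K$ has positive extent in the gradient direction of the affine function describing $P_K$; if $K$ is a single point, or lies in a level set of that affine function, nothing forces $Df(x_0)=0$ and the propagation cannot start. And even in the interior case, the flat region you produce fills only one connected component of $\mathrm{int}(\pi(M_+))$, which need not be the component containing the point of positive mean curvature. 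The paper sidesteps all of this by decomposing $\mathbb{R}^n\setminus\pi(M_+)$ into components $U_\alpha$: for each bounded $U_\alpha$ not containing $\Omega$, the set $\pi^{-1}(\partial U_\alpha)$ lies in $M_0$, stays away from $\partial M$ because $|Df|=\infty$ there, sits in a single hyperplane by Lemma~\ref{le:geodesic}, and Theorem~\ref{th:zero} applied over that hyperplane forces $\pi^{-1}(U_\alpha)\subset M_+$, a contradiction; unboundedness of $M_+$ itself follows by the same device applied to the unbounded complementary component.
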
 

\begin{proof}

Note that $M$ is homeomorphic to $\mathbb{R}^n \setminus \Omega$ given by the projection $\pi(x, f(x)) = x$. Let $\mathbb{R}^n \setminus \pi(M_+) = \sqcup_{\alpha} U_{\alpha}$ where each $U_{\alpha}$ is a connected component and $\Omega \subset U_{\alpha_0}$ for some $\alpha_0$. Because $\pi(M_+)$ is connected, each $\partial U_{\alpha}$ is connected~\cite[Proposition A.3]{Huang-Wu}. We claim that $U_{\alpha}$ is unbounded for each $\alpha \neq \alpha_0$.  Suppose to the contrary that $U_{\alpha}$ is bounded for some $\alpha \neq \alpha_0$. Denote $\pi^{-1} (\partial U_{\alpha}) = \Gamma$. By Lemma~\ref{le:geodesic}, $\Gamma \subset M_0$ lies in a hyperplane $\Pi$. Note that $\overline{\Gamma}$ does not intersect with $\p M$ because $|Df| = \infty$ on $\partial M$. Hence $M$ can be represented as the graph of a $C^{n+1}$-function $u$ in an open neighborhood of $\Gamma$ in $\Pi$, and $u$ satisfies $u=0, |Du|=0, |D^2 u|=0$ on $\Gamma$.  By Theorem~\ref{th:zero}, we have $u\equiv 0$ on $\pi^{-1}(U_{\alpha})$ and hence $\pi^{-1}(U_{\alpha}) \subset M_+$. It contradicts. Similarly, one can show that $U_{\alpha_0}$ is unbounded and $\partial U_{\alpha_0}$ does not intersect $\partial \Omega$, unless $U_{\alpha_0} = \Omega$.

Furthermore, $M_+$ must be unbounded. Suppose not. Then the complement $\mathbb{R}^n \setminus \pi(M_+) $ has a unique unbounded connected component that contains infinity. Therefore, either $\mathbb{R}^n \setminus \pi(M_+) = \Omega \sqcup U$ where $U$ is the unbounded connected component that contains infinity, or $\mathbb{R}^n \setminus \pi(M_+) = U_{\alpha_0}$ if $U_{\alpha_0}$ is unbounded. Hence we have either $\partial \pi(M_+) = \partial \Omega \sqcup \partial U$ or $\partial \pi(M_+)= \partial U_{\alpha_0}$. Applying  Theorem~\ref{th:zero} to $\pi(M_+)$ on either $\partial U$ or $\partial U_{\alpha_0}$, we have $H\equiv 0$ on $M_+$ which contradicts that $M_+$ contains a point of positive mean curvature.

\end{proof}
\begin{pf-mean-convexity}
Suppose to the contrary that $H$ changes signs. Let $M_+$ be a connected component of $\{p \in M: H \ge 0 \mbox{ at } p\}$ that contains a point of positive mean curvature. By Theorem~\ref{th:unbounded},  a component $\Gamma$ of $\partial M_+$ is unbounded. By Lemma~\ref{le:geodesic}, $\Gamma$ lies in a hyperplane $\Pi$ and $M$ is tangent to $\Pi$ at $\Gamma$. By the assumption that $f$ is asymptotically flat, the upward unit normal vector of $M$ converges to $\p_{n+1}$ at infinity. Because $M$ is tangent to the hyperplane $\Pi$ at an unbounded set,  we must have $\lim_{|x|\to \infty} f(x) = C$ for some bounded constant $C$ and  $\Pi = \{ x^{n+1} = C\}$.  

By Lemma~\ref{le:nonempty}, the level set $ \{ x : f(x)= C+\epsilon\}$ has non-empty intersection with $\{ p\in \textup{int}(M): H > 0 \mbox{ at } p \}$ for all $\epsilon>0$ sufficiently small. Let $\Sigma_{C+\epsilon}$ be a connected component of the level set which  intersects  $\{ p\in \textup{int}(M): H > 0 \mbox{ at } p \}$.  Note that $\Sigma_{C+\epsilon}$ is closed if $\epsilon \neq 0$, and that $H\ge 0$ at every point of $\Sigma_{C+\epsilon}$ because by Theorem~\ref{th:unbounded} the mean curvature $H$ can only change signs through an unbounded subset of $M_0$, which must lie on $\{ x^{n+1} = C\}$. By Morse-Sard theorem, $\Sigma_{C+\epsilon}$ is a $C^{n+1}$ submanifold with $|D f| > 0$ for almost every $\epsilon$. Let $\eta =D f / |D f|$ be a unit normal vector of $\Sigma_{C+\epsilon}$ where $C+\epsilon$ is a regular value. For $\epsilon>0$ sufficiently small, $\eta$ points \emph{inward} to the region enclosed by $\Sigma_{C+\epsilon}$ because $f$ decreases to $C$ at infinity. Let $H_{\Sigma_{C+\epsilon}}$ be the mean curvature with respect to $\eta$. By Proposition~\ref{pr:HHR}, $H_{\Sigma_{C+\epsilon}} \le 0$ at every point of $\Sigma_{C+\epsilon}$, which contradicts  compactness of $\Sigma_{C+\epsilon}$.
\qed

\end{pf-mean-convexity}

\section{Ellipticity and maximum principles}  \label{se:maximum}
In this section, we will derive various maximum principles for graphs with non-negative scalar curvature. The scalar curvature equation of the graphing function is fully nonlinear. Its linearization, first introduced by~\cite{Reilly:1973}, gives a second-order differential equation. The linearized equation may not be elliptic in general. It is known that if the scalar curvature is a \emph{positive} constant, then the strict ellipticity trivially holds pointwise (see, for example,~\cite{Rosenberg:1993}). However, this is no longer true if the scalar curvature may vanish. An important step to establish our maximum principles is to explore the (strict) ellipticity of the linearized scalar curvature equation. 

\begin{lemma} \label{le:linearized}
Let $u$ be a $C^2$ function and let $R$ be the scalar curvature of the graph of $u$. Then
\[
	\frac{\p R}{ \p u_{ij}} (Du, D^2 u)=\frac{2}{\sqrt{1+|Du|^2}} \left( H g^{ij} - \sum_k A^i_k g^{kj}\right).
\]
\end{lemma}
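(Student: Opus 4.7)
The plan is to compute $\partial R/\partial u_{ij}$ directly from the closed-form expressions for $R$, $H$, $A^i_j$, and $g^{ij}$ already recorded in the notation section just above. Since the derivative is taken with $Du$ held fixed and $u_{kl}$ enters linearly in both $H$ and $A^k_l$, the computation reduces to one application of the chain rule plus a symmetry check.

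First I would record the two elementary partial derivatives
\[
	\frac{\partial H}{\partial u_{ij}} = \frac{g^{ij}(Du)}{\sqrt{1+|Du|^2}}, \qquad \frac{\partial A^k_l}{\partial u_{ij}} = \frac{g^{ki}(Du)\,\delta_{jl}}{\sqrt{1+|Du|^2}},
\]
obtained by inspection from $H = \sum_{k,l} g^{kl} u_{kl}/\sqrt{1+|Du|^2}$ and $A^k_l = \sum_p g^{kp} u_{pl}/\sqrt{1+|Du|^2}$. Applying the chain rule to $R = H^2 - \sum_{k,l} A^k_l A^l_k$ and using the symmetry of the double sum to combine the two cross terms yields
\[
	\frac{\partial R}{\partial u_{ij}} = \frac{2}{\sqrt{1+|Du|^2}} \left(H\, g^{ij} - \sum_k A^j_k\, g^{ki}\right).
\]

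The only mildly subtle step is matching $\sum_k A^j_k g^{ki}$ with the stated $\sum_k A^i_k g^{kj}$. For this I would write $A^i_k = \sum_p g^{ip} A_{pk}$, so that $\sum_k A^i_k g^{kj} = \sum_{k,p} g^{ip} A_{pk} g^{kj}$ is manifestly symmetric in $(i,j)$ (since $A_{pk}$ is symmetric and one may swap $p \leftrightarrow k$ together with $i \leftrightarrow j$). There is no real obstacle here; the lemma is essentially index bookkeeping, once one notices that $R$ depends on $D^2 u$ only through the expressions $H$ and $A^i_j$ and that the answer must be symmetric in $(i,j)$ because the Hessian variable is.
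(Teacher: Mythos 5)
Your proposal is correct and takes essentially the same route as the paper: a direct chain-rule computation exploiting that $H$ and $A^k_l$ are linear in $D^2u$, yielding $\frac{\partial R}{\partial u_{ij}} = \frac{2}{\sqrt{1+|Du|^2}}\bigl(Hg^{ij}-\sum_k A^i_k g^{kj}\bigr)$. Your explicit symmetrization step $\sum_k A^j_k g^{ki}=\sum_k A^i_k g^{kj}$ is sound (it follows from the symmetry of $(g^{ij})$ and $(A_{ij})$) and merely makes visible an index identification the paper's proof leaves implicit.
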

\begin{proof}
 By chain rule, 
   \begin{align*}
     \frac{\partial R}{\partial u_{ij}}
     & = \sum_{k,l} \frac{\partial R}{\partial A^k_l} \frac{\partial A^k_l}{\partial u_{ij}} \\
  &   = \sum_{k,l} \frac{\partial R}{\partial A^k_l} \frac{\partial }{\partial u_{ij}}\left(\sum_{p} g^{kp} A_{pl} \right) \\
   &  = \sum_{k} \frac{\partial R}{\partial A^k_i} \frac{g^{kj}}{\sqrt{1+|Du|^2}}\\
     & =  2 H\frac{g^{ij}}{\sqrt{1+|Du|^2}}-2 \sum_{k} A^i_k \frac{g^{kj}}{\sqrt{1+|Du|^2}}.
   \end{align*}
\end{proof}

\begin{prop} \label{pr:ellipticity}
Let $u$ be a $C^2$ function, and let $R$ and $H$ be the scalar curvature and mean curvature of the graph of $u$, respectively. If $R\ge 0$ and $H\ge 0$, then the matrix $\big( H g^{ij} - \sum_k A^i_k g^{kj}\big)$ is semi-positive definite.
\end{prop}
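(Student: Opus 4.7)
The plan is to reduce the semi-positivity of the symmetric matrix $M^{ij} := Hg^{ij} - \sum_k A^i_k g^{kj}$ to the algebraic inequality $H - \lambda_k \ge 0$ for each principal curvature $\lambda_k$ of the graph, and then to derive the latter from the algebraic identity in Proposition~\ref{pr:id}.

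At any fixed point of the graph, I would choose a $g$-orthonormal basis of the tangent space that diagonalizes the shape operator, so that $g^{ij} = \delta^{ij}$ and $A^i_k = \lambda_k \delta^i_k$ with principal curvatures $\lambda_1,\dots,\lambda_n$. Since $(A^i_k)$ is self-adjoint with respect to $g$, such a basis exists, and in it $M^{ij}$ becomes diagonal with entries $H - \lambda_i$. Semi-positive definiteness of $M^{ij}$ is therefore equivalent to showing $H - \lambda_k \ge 0$ for each $k$.

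To establish the latter, I would apply Proposition~\ref{pr:id} with $B = (A^i_k)$ in its diagonal form. Because $B$ is diagonal, the sign hypothesis $b_{ij}b_{ji} \ge 0$ is trivially satisfied, while $\sigma_1(B) = H$, $\sigma_1(B|k) = H - \lambda_k$, and $\sigma_2(B) = \frac{1}{2}(H^2 - |A|^2) = R/2$. Proposition~\ref{pr:id} then yields
\[
	H \, (H - \lambda_k) \ge \frac{R}{2} + \frac{n}{2(n-1)} (H - \lambda_k)^2.
\]

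Finally I would combine this inequality with the hypotheses $H \ge 0$ and $R \ge 0$ to force $H - \lambda_k \ge 0$. If $H > 0$, then a hypothetical $H - \lambda_k < 0$ would make the left-hand side negative and the right-hand side non-negative, a contradiction. If $H = 0$, the left-hand side vanishes and hence $R = 0$ and $H - \lambda_k = 0$; thus every $\lambda_k$ is zero, $(A^i_k) = 0$, and $M^{ij}$ is the zero matrix, which is trivially semi-positive definite. I do not anticipate a serious obstacle: the content of the proposition is essentially the algebraic identity of Proposition~\ref{pr:id} rewritten in the geometric language of the shape operator, with the ellipticity conclusion following as a direct linear-algebra consequence.
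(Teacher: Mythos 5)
Your proposal is correct and follows essentially the same route as the paper: diagonalize the shape operator, observe that the matrix in question reduces (up to the positive definite factor $g^{-1}$, or equivalently in a $g$-orthonormal eigenbasis) to $\mathrm{diag}(\sigma_1(A|1),\dots,\sigma_1(A|n))$, and deduce $\sigma_1(A|k)=H-\lambda_k\ge 0$ from Proposition~\ref{pr:id} together with $H\ge 0$ and $R=2\sigma_2(A)\ge 0$. The only difference is that you spell out the sign argument (the cases $H>0$ and $H=0$) that the paper leaves implicit.
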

\begin{proof}
Because $H g^{ij} - \sum_k A^i_k g^{kj} =\sum_k (H \delta^i_k - A^i_k) g^{kj} $ and $(g^{kj})$ is positive definite, it suffices to  prove that $(H \delta^i_k - A^i_k)$ is semi-positive definite. By rotating the coordinates, we assume $(A^i_k) = \mbox{diag}(\lambda_1, \dots, \lambda_n)$. Then
\[
	(H \delta^i_k - A^i_k) = \mbox{diag} (\sigma_1 (A|1), \dots, \sigma_1(A|n)).
\]
By Proposition~\ref{pr:id}, because $H = \sigma_1(A) \ge 0$ and $R = 2\sigma_2(A)\ge 0$, we have $\sigma_1(A|k)\ge 0$ for all $k=1,\dots, n$.
\end{proof}

\begin{thm}[Strong maximum principle for the interior point] \label{th:strong}
Let $\Omega$ be a connected open subset in $\mathbb{R}^n$. Suppose $u, v \in C^2 (\Omega)$, $u\ge v$ in $\Omega$, and $u,v$ satisfy 
\begin{align*}
	& R(Du, D^2u) = 0,\quad R(Dv, D^2 v) \ge 0,\\
	& H(Du, D^2u) \ge 0, \quad \mbox{and} \quad H(Dv, D^2 v) \ge 0 \quad \mbox{ in } \Omega.
\end{align*}
We assume that either $u$ or $v$ satisfies $\left(H g^{ij} - \sum_k A^i_k g^{kj} \right)$ being positive definite in $\Omega$. If $u=v$ at some point in $\Omega$, then $u\equiv v$ in $\Omega$.
\end{thm}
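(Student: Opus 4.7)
The plan is to write $w = u - v \ge 0$ and derive a linear second-order differential inequality $Lw \le 0$, then apply the classical strong maximum principle locally at a coincidence point. Interpolating along the straight-line path $u_t = v + tw$, the fundamental theorem of calculus gives
\[
R(Du, D^2u) - R(Dv, D^2v) = a^{ij}(x)\, w_{ij} + b^i(x)\, w_i,
\]
where $a^{ij}(x) = \int_0^1 \frac{\partial R}{\partial u_{ij}}(Du_t, D^2 u_t)\, dt$ and $b^i$ is defined analogously from $\partial R/\partial u_i$. Since $R(Du, D^2u) = 0$ and $R(Dv, D^2v) \ge 0$, we obtain $Lw := a^{ij} w_{ij} + b^i w_i \le 0$ in $\Omega$.

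The crux is to verify that $(a^{ij}(x))$ is positive definite near any point $x_0 \in \Omega$ where $u(x_0) = v(x_0)$. At such a point $w$ attains its minimum $0$, so $Du(x_0) = Dv(x_0)$, and hence $Du_t(x_0)$ is independent of $t$. By Lemma~\ref{le:linearized}, $\partial R/\partial u_{ij}(p,q)$ is \emph{linear} in $q$ with $p$ held fixed, since both $H(p,q)$ and $A^i_k(p,q)$ are linear in $q$ while $g^{ij}(p)$ depends only on $p$. Therefore
\[
\frac{\partial R}{\partial u_{ij}}(Du_t, D^2 u_t)(x_0) = t\, M_u^{ij} + (1-t)\, M_v^{ij},
\]
where $M_u^{ij}, M_v^{ij}$ denote the matrices $\partial R/\partial u_{ij}$ evaluated on $u$ and $v$ respectively at $x_0$. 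Proposition~\ref{pr:ellipticity} shows both are positive semi-definite, and by hypothesis one of them is positive definite; integrating in $t$ gives $a^{ij}(x_0) = \tfrac12(M_u^{ij} + M_v^{ij})$, which is positive definite. Since $u,v \in C^2$, the coefficient $a^{ij}(x)$ is continuous in $x$, so ellipticity persists in a neighborhood $U$ of $x_0$.

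On $U$ the operator $L$ is uniformly elliptic with bounded coefficients and no zeroth-order term, and $w \ge 0$ attains its minimum $0$ at the interior point $x_0$ while $Lw \le 0$. The classical strong maximum principle (see, for example, Gilbarg--Trudinger Theorem 3.5) forces $w \equiv 0$ on $U$. Consequently the coincidence set $E = \{x \in \Omega : w(x) = 0\}$ is open; it is also closed and non-empty, so connectedness of $\Omega$ yields $E = \Omega$, i.e., $u \equiv v$.

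The main obstacle I anticipate is precisely the ellipticity step: along the intermediate profiles $u_t$, neither $R(Du_t, D^2 u_t) \ge 0$ nor $H(Du_t, D^2 u_t) \ge 0$ is guaranteed, so Proposition~\ref{pr:ellipticity} does not certify ellipticity of the averaged operator $L$ globally on $\Omega$. The localization above circumvents this difficulty by exploiting the fact that at a coincidence point the gradients of $u$ and $v$ agree, reducing the interpolation to a convex combination of Hessians with \emph{fixed} gradient; only in this situation does the linearity of $\partial R/\partial u_{ij}$ in the Hessian argument collapse the integral along the path to a clean average of the two endpoint matrices to which Proposition~\ref{pr:ellipticity} and the hypothesis apply.
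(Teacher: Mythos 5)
Your proof is correct and follows essentially the same route as the paper's: linearize via the fundamental theorem of calculus, observe that at a touching point $Du=Dv$ so the second-order coefficient collapses to a positive multiple of the sum of the two matrices $\big(Hg^{ij}-\sum_k A^i_k g^{kj}\big)$ for $u$ and $v$, invoke Proposition~\ref{pr:ellipticity} together with the positive-definiteness hypothesis, and conclude by the local strong maximum principle plus an open--closed argument. The only (immaterial) difference is that you interpolate along the single straight-line path $(Dv,D^2v)\to(Du,D^2u)$, whereas the paper splits the difference into a Hessian interpolation at frozen gradient $Du$ followed by a gradient interpolation.
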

\begin{proof} 
Let $R(p, \xi) \in C^1(\mathbb{R}^n \times \mathbb{R}^{n\times n})$ be the scalar curvature operator. Hence
\begin{align*}
	0&\ge R(Du, D^2 u) - R (Dv, D^2 v) \\
	&=R(Du, D^2 u) - R(Du, D^2 v) + R(Du, D^2 v) - R(Dv, D^2 v)\\
	&=\sum_{i,j} a^{ij}(u_{ij}-v_{ij})  +  \sum_i b^i\, (u_i-v_i),
\end{align*}
where 
\[
	b^i =\int_0^1 \frac{ \partial R}{\partial p_i} (tDu + (1-t ) Dv, D^2 v) \, dt 
\]	
and by Lemma~\ref{le:linearized}
\begin{align*}
	a^{ij}&= \int_0^1 \frac{\partial R}{\partial \xi_{ij}} (Du, tD^2 u + (1-t) D^2 v) \, dt\\
	&=\frac{1}{\sqrt{1+|Du|^2}} \bigg( \big(H(Du, D^2 u) g^{ij}(Du) - \sum_k A^i_k(Du, D^2 u) g^{kj}(Du) \big)\\
	&\quad+\big(  H(Du, D^2 v) g^{ij}(Du)- \sum_k A^i_k(Du, D^2 v)) g^{kj}(Du) \big) \bigg).
\end{align*}
If $u=v$ at $p \in \Omega$, then $Du = Dv$ at $p$. Then, by the assumption and Proposition~\ref{pr:ellipticity}, $(a^{ij})$ is positive definite at $p$. By continuity, $(a^{ij})$ is positive definite in an open neighborhood $\Omega'$ of $p$ in $\Omega$. Then by the standard strong maximum principle, $u\equiv v$ in $\Omega'$. Hence, the set $\{p\in \Omega: u(p) = v(p) \}$ is open and closed. Because $\Omega$ is connected, we prove that $u\equiv v$ in $\Omega$.
\end{proof}

\begin{thm}[Strong maximum principle for the boundary point] \label{th:boundary}
Let $\Omega_1, \Omega_2$ be connected open sets in $\mathbb{R}^n$ such that $\Omega_1 \subset \Omega_2$. Suppose  $p \in \p \Omega_1 \cap \p \Omega_2 \neq \emptyset$ and  $\p \Omega_1, \p \Omega_2$ are $C^1$ near $p$.

Let $u \in C^2 (\Omega_1)\cap C^0 (\overline{\Omega}_1), v\in C^2 (\Omega_2) \cap C^0 (\overline{\Omega}_2)$.  Suppose the graphs of $u, v$ are $C^2$ hypersurfaces up to boundary which satisfy
\begin{align*}
	& R(Du, D^2u) = 0,\quad R(Dv, D^2 v) \ge 0 \\
	& H(Du, D^2 u) \ge 0,  \quad \mbox{and} \quad H(Dv, D^2 v) \ge 0 \quad\mbox{ in }  \Omega_1. 
\end{align*}
We also assume that either $u$ or $v$ satisfies $\left(H g^{ij} - \sum_k A^i_k g^{kj} \right)$ being positive definite in $ \Omega_1$. If $u\ge v\ge 0$ in $ \Omega_1$  and $u|_{\p \Omega_1 \cap B_r(p)} = v|_{\p \Omega_2\cap B_r(p)} = 0$ for some open ball centered at $p$ of radius $r$ with $|Du(x)|\to \infty$ and $|Dv(x)|\to \infty$ as $x \to p \in \p \Omega_1 \cap \p \Omega_2$, then $u\equiv v$ in $\Omega_1$.
\end{thm}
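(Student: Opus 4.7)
The plan is to reduce this boundary-point maximum principle to the interior version, Theorem~\ref{th:strong}, via a local re-graphing of both hypersurfaces over their common vertical tangent plane at $(p,0)$. Since $\Omega_1 \subset \Omega_2$ with $p \in \partial\Omega_1 \cap \partial\Omega_2$ and both boundaries are $C^1$ near $p$, the tangent hyperplanes $T_p\partial\Omega_1$ and $T_p\partial\Omega_2$ must coincide, for otherwise $\partial\Omega_1$ would locally cross $\partial\Omega_2$, contradicting the containment. After a horizontal rigid motion I may assume $p = 0$ and that this common tangent hyperplane is $\{x^n = 0\}\subset\mathbb{R}^n$ with inward normal $e_n$, so $\partial\Omega_i$ is locally the graph $x^n = \phi_i(x')$ with $\phi_i(0)=0$, $D\phi_i(0)=0$, and $\phi_1 \ge \phi_2$ near $0$. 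The assumed blow-up $|Du|,|Dv|\to\infty$ at $p$ then implies both graph hypersurfaces share the common vertical tangent hyperplane $\{x^n=0\}\subset\mathbb{R}^{n+1}$ at $(p,0)$.

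Next I re-parametrize both hypersurfaces over this common tangent plane by inverting the $x^n$ coordinate. Since $u_n, v_n \to +\infty$ approaching $\partial\Omega_i$ (as $u,v$ vanish on the boundary and are nonnegative in $\Omega_1$), the implicit function theorem produces $C^2$ inverses $\tilde u(x', x^{n+1})$ and $\tilde v(x', x^{n+1})$ defined by $u(x',\tilde u) = v(x',\tilde v) = x^{n+1}$ on a half-ball $\tilde D := \{x^{n+1} > 0\}\cap B_\delta(0)$; they extend $C^2$ up to the flat face $\{x^{n+1}=0\}$, on which $\tilde u = \phi_1$ and $\tilde v = \phi_2$. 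The gradient blow-up translates directly into $D\tilde u(0) = D\tilde v(0) = 0$. Monotonicity of $u, v$ in $x^n$ converts $u \ge v$ into the reversed inequality $\tilde u \le \tilde v$ throughout $\tilde D$; letting $x^{n+1} \to 0^+$ then forces $\phi_1 \le \phi_2$, which combined with $\phi_1 \ge \phi_2$ yields $\phi_1\equiv\phi_2$. Hence $\partial\Omega_1 = \partial\Omega_2$ near $p$, the domains of $\tilde u, \tilde v$ coincide, and $\tilde u = \tilde v$ on the flat face.

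Since scalar curvature, mean curvature, and the matrix $Hg^{ij}-\sum_k A^i_k g^{kj}$ are geometric invariants of the hypersurface, all hypotheses of Theorem~\ref{th:strong} (including the positive-definiteness, via Proposition~\ref{pr:ellipticity}) transfer verbatim to $(\tilde u,\tilde v)$ on $\tilde D$. Running the same linearization as in Theorem~\ref{th:strong}, the difference $\tilde w := \tilde v - \tilde u \ge 0$ satisfies a linear second-order inequality $\sum a^{ij}\tilde w_{ij} + \sum b^i \tilde w_i \le 0$ with $(a^{ij})$ uniformly positive definite near $0$. If $\tilde w(q) = 0$ at some interior point $q \in \tilde D$, Theorem~\ref{th:strong} on the connected set $\mathrm{int}(\tilde D)$ immediately gives $\tilde u \equiv \tilde v$ there. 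Otherwise $\tilde w > 0$ in $\mathrm{int}(\tilde D)$, and Hopf's boundary point lemma at $0$ (the half-ball satisfies the interior sphere condition) produces $\partial_\nu\tilde w(0) \ne 0$, contradicting $D\tilde w(0) = 0$. Either way $\tilde u \equiv \tilde v$ on $\mathrm{int}(\tilde D)$, so the graphs of $u,v$ agree in a neighborhood of $(p,0)$, and in particular $u = v$ on a nonempty open subset of $\Omega_1$. A final application of Theorem~\ref{th:strong} on the connected open set $\Omega_1$ then gives $u \equiv v$ on $\Omega_1$.

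The main delicacy is in the re-graphing step: one must carefully verify that $\tilde u, \tilde v$ are genuinely $C^2$ up to the flat face (using the assumed $C^2$-regularity of the hypersurfaces up to their original boundary, together with the implicit function theorem applied despite the blow-up of $|Du|$), and that the positive definiteness of $Hg^{ij}-\sum_k A^i_k g^{kj}$ transfers to \emph{uniform} strict ellipticity of $(a^{ij})$ in a neighborhood of $0$ so that Hopf's lemma is legitimately applicable; this holds in the intended application, where one of the graphs is a Schwarzschild solution whose strict ellipticity persists up to its minimal boundary by Proposition~\ref{pr:strict-ellipticity}. Granted these, the remainder is the standard linearization-plus-maximum-principle scheme already used in Theorem~\ref{th:strong}.
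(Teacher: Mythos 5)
Your proposal is correct and follows essentially the same route as the paper: re-graph both hypersurfaces near $(p,0)$ over the common vertical tangent hyperplane, transfer the curvature hypotheses and the linearization from Theorem~\ref{th:strong} to the re-graphed functions, apply the Hopf boundary point lemma at $p$ (where the difference and its normal derivative both vanish) to force an interior touching point, and then invoke Theorem~\ref{th:strong} to conclude $u \equiv v$ on $\Omega_1$. The extra details you supply (coincidence of the tangent planes of $\p\Omega_1$ and $\p\Omega_2$ at $p$, and the identity $\phi_1 \equiv \phi_2$ near $p$) are elaborations the paper leaves implicit, not a different argument.
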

\begin{proof}
Let $\Pi$ be the vertical hyperplane in $\mathbb{R}^{n+1}$ so that $\Pi$ is tangent to $\p \Omega_1 \times\{ x^{n+1}\mbox{-axis}\}$ at $p \times \{ x^{n+1}\mbox{-axis}\}$. The graphs of $u, v$ near $p$ can be locally represented as the graphs of some functions $\tilde{u}, \tilde{v}$ over a subset of $\Pi$, say $\tilde{u}, \tilde{v} \in C^2 (D \times [0,\epsilon])$ where $p\in \mbox{int}(D)\subset\{ x^{n+1} = 0\}$ and $D \times [0,\epsilon] \subset \Pi$. Moreover, $\tilde{u}$ and $\tilde{v}$ satisfy $\tilde{u} \ge \tilde{v} $ in $D \times [0,\epsilon]$,  $\tilde{u}=\tilde{v}$ and $\p_{n+1}\tilde{u} = \p_{n+1}\tilde{v}=0$ at $p \in D\times \{0\}$, and 
\begin{align*}
	& R(D\tilde{u}, D^2\tilde{u}) = 0,\quad R(D\tilde{v}, D^2 \tilde{v}) \ge 0 \\
	& H(D\tilde{u}, D^2 \tilde{u}) \ge 0,  \quad \mbox{and} \quad H(D\tilde{v}, D^2 \tilde{v}) \ge 0 \quad\mbox{ in } D \times [0,\epsilon].
\end{align*}
Either $\tilde{u}$ or $\tilde{v}$ has $\left(H g^{ij} - \sum_k A^i_k g^{kj} \right)$ being positive definite in $D\times [0,\epsilon]$. As analyzed in the proof of Theorem~\ref{th:strong}, $(\tilde{u}-\tilde{v})$ satisfies
\[
	0\ge \sum_{i,j} a^{ij} (\tilde{u}_{ij} - \tilde{v}_{ij}) + \sum_i b^i  (\tilde{u}_i - \tilde{v}_i), 
\]
where $(a^{ij})$ is positive definite in $D\times [0,\epsilon]$ with  a possibly smaller $\epsilon$. Then by the standard Hopf boundary point lemma, $\tilde{u} = \tilde{v}$ at some interior points of $D \times [0,\epsilon]$. Hence $u=v$ at some interior points in $\Omega_1$, and by Theorem~\ref{th:strong}, $u \equiv v$ everywhere in  $\Omega_1$. 
\end{proof}

To prove Theorem~\ref{th:Schwarzschild}, we need the following version of the strong maximum principle for the boundary point, where the domains of $u$ and $v$ are complement to each other. The proof is nearly identical to the proof of Theorem~\ref{th:boundary}, so we omit it.

\begin{thm} \label{th:strong-max}
Let $\Omega$ be an open subset in $\mathbb{R}^n$. Let $p \in \p \Omega$ and  consider the open ball $B_r(p)$ centered at $p$ of radius $r$ for some $r >0$ small. Suppose $\p \Omega \cap B_r (p)$ is $C^1$. Let $u \in C^2 (B_r(p) \setminus (\overline{\Omega \cap B_r(p)}))\cap C^0 (B_r(p) \setminus (\Omega\cap B_r(p))), v\in C^2 (\Omega\cap B_r(p)) \cap C^0 (\overline{\Omega \cap B_r(p)})$ and $u\ge 0, v\le 0$.  Suppose  the graphs of $u, v$ are $C^2$ hypersurfaces up to boundary which satisfy
\begin{align*}
	& R(Du, D^2u) = 0,\quad R(Dv, D^2 v) = 0 \\
	& H(Du, D^2 u) \ge 0,  \quad \mbox{and} \quad H(Dv, D^2 v) \ge 0. 
\end{align*}
We also assume that the matrix $\left(H g^{ij} - \sum_k A^i_k g^{kj} \right)$ of $u$ is positive definite. If $u|_{\p \Omega \cap B_r(p)} = v|_{\p \Omega\cap B_r(p)} = 0$. then   $|Du(x)|$ and $|Dv(x)|$  cannot both tend to $\infty$ as $x \to p \in \p \Omega$.
\end{thm}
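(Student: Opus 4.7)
The plan is to argue by contradiction along the lines of the proof of Theorem~\ref{th:boundary}, after a preliminary vertical reflection of $v$ that places both graphs on the same side of the lifting hyperplane. Suppose for contradiction that $|Du(x)|\to\infty$ and $|Dv(x)|\to\infty$ simultaneously as $x\to p$. I would first replace $v$ by $-v$: this function is nonnegative on $\Omega\cap B_r(p)$, vanishes on $\p\Omega\cap B_r(p)$, and satisfies $|D(-v)|\to\infty$ at $p$. Vertical reflection across $\{x^{n+1}=0\}$ is a Euclidean isometry, so the graph of $-v$ still satisfies $R=0$, and Proposition~\ref{pr:ellipticity} applied to $v$ (using $R(Dv,D^2v)=0$ and $H(Dv,D^2v)\ge 0$) gives that its ellipticity matrix $(Hg^{ij}-\sum_k A^i_k g^{kj})$ is semi-positive definite; together with the strict positive definiteness assumed for $u$, this one-sided bound is the input to the elliptic comparison below.

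Next I would let $\Pi\subset\mathbb{R}^{n+1}$ be the vertical hyperplane tangent to $\p\Omega\times\mathbb{R}_{x^{n+1}}$ at $(p,0)$ and, exactly as in the proof of Theorem~\ref{th:boundary}, represent both graphs locally as graphs of $C^2$ functions $\tilde u$ and $\widetilde{-v}$ over a box $D\times[0,\epsilon]\subset\Pi$ on the $y^n\ge 0$ side; this lift is available precisely because $|Du|$ and $|D(-v)|$ both tend to infinity at $p$. Let $\psi:D\to\mathbb{R}$ be the $C^1$ function whose graph over $D\subset \{x^{n+1}=0\}$ parametrizes $\p\Omega\cap B_r(p)$; then $\psi(0)=0$, $D\psi(0)=0$, and both $\tilde u$ and $\widetilde{-v}$ equal $\psi$ on the base $D\times\{0\}$. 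Because $\mathrm{graph}(u)$ sits on the $\mathbb{R}^n\setminus\Omega$ side and $\mathrm{graph}(-v)$ on the $\Omega$ side, one obtains
\[
    \tilde u(y',y^n)\ge \psi(y')\ge \widetilde{-v}(y',y^n)\quad\text{in }D\times[0,\epsilon],
\]
with equality exactly on $D\times\{0\}$. The vertical tangent-plane condition at $(p,0)$ forces $D\tilde u(0,0)=D\widetilde{-v}(0,0)=0$, and both lifted functions continue to solve $R=0$ in the interior.

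The conclusion then copies the Hopf boundary point argument from the proof of Theorem~\ref{th:boundary}. Subtracting the two scalar curvature equations and invoking Lemma~\ref{le:linearized} as in Theorem~\ref{th:strong} yields an equation
\[
    0=\sum_{i,j}a^{ij}(\tilde u-\widetilde{-v})_{ij}+\sum_i b^i(\tilde u-\widetilde{-v})_i,
\]
whose top-order coefficient $(a^{ij})$ at $(0,0)$ reduces to a positive multiple of the sum of the ellipticity matrices of $\tilde u$ and $\widetilde{-v}$ at that point. The main obstacle I expect is the careful bookkeeping of orientations in the lift: the ``upward'' convention swaps from $+x^{n+1}$ to $+y^n$, and at the vertical-tangent point $(p,0)$ the original limiting normal is horizontal, so the strict positive definiteness of $(Hg^{ij}-\sum_k A^i_k g^{kj})$ for $u$ and the semi-positive definite bound for $v$ transfer to the lift with the same overall sign flip. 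The upshot is that $(a^{ij})(0,0)$ is strictly definite---the overall sign being immaterial for the maximum principle---and hence strictly definite in an open neighborhood by continuity. Applying the Hopf lemma to $\tilde u-\widetilde{-v}$, which is nonnegative, strictly positive in the interior, and vanishes on $D\times\{0\}$, forces $\p_{y^n}(\tilde u-\widetilde{-v})(0,0)\ne 0$; this contradicts the direct computation $\p_{y^n}\tilde u(0,0)=\p_{y^n}\widetilde{-v}(0,0)=0$ (the second identity following from $\widetilde{-v}_{y^n}(y',y^n)=-\tilde v_{y^n}(y',-y^n)$), and so $|Du|$ and $|Dv|$ cannot both blow up at $p$.
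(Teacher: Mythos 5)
Your proof is correct and follows exactly the route the paper intends: the paper omits the proof of Theorem~\ref{th:strong-max}, saying only that it is ``nearly identical to the proof of Theorem~\ref{th:boundary}'', and your argument is precisely that proof supplemented by the one genuinely new ingredient, namely the vertical reflection of $v$ and the attendant sign bookkeeping for the two ellipticity matrices in the lift over $\Pi$. Your sign analysis is right---after lifting, both matrices are (semi-)definite of the same sign, so the Hopf boundary point lemma applies to $\tilde u-\widetilde{-v}$ (which is strictly positive in the interior since the two graphs project to opposite sides of $\p\Omega$) and contradicts the vanishing of its normal derivative at $(p,0)$ forced by the common vertical tangent plane.
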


\begin{thm}[Global strong maximum principle] \label{th:global-maximum}
Let $\Omega$ be a bounded subset  (not necessarily connected) in $\mathbb{R}^n$, and let $v\in C^2 (\mathbb{R}^n \setminus  \Omega)$ be asymptotically flat. We assume that the graph of $v$ satisfies $R= 0$ and $H\ge 0$ in $\mathbb{R}^n \setminus \Omega$. Let $h$ be the Schwarzschild solution given by Proposition~\ref{pr:Schwarzschild}. Then there exists $r\gg 1$ so that, for any $r_2 > r_1 \ge r$, 
\begin{align*}
	 \max_{\overline{B}_{r_2} \setminus B_{r_1}} (h-v)&= \max_{S_{r_2} \cup S_{r_1}} (h-v)\\
	 \min_{\overline{B}_{r_2} \setminus B_{r_1}} (h-v)&= \min_{S_{r_2} \cup S_{r_1}} (h-v).
\end{align*}
If $(h-v)$ attains its maximum or minimum at an interior point in $B_{r_2}\setminus \overline{B}_{r_1}$, then $(h-v)$ must be a constant in $\overline{B}_{r_2} \setminus B_{r_1}$.
\end{thm}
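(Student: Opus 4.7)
The plan is to reduce the claim to the classical strong maximum principle by linearizing the difference of the scalar-curvature equations satisfied by $h$ and $v$, and showing that the resulting elliptic operator is locally strictly elliptic at any interior extremum of $w := h - v$. First, I would choose $r$ large enough that $\Omega \cup \overline{B}_{(2m)^{1/(n-2)}} \subset B_r$; this guarantees that $h$ and $v$ are both $C^2$ on any annulus $\overline{B}_{r_2} \setminus B_{r_1}$ with $r_1 \ge r$, that Proposition~\ref{pr:strict-ellipticity} applies to $h$ throughout, and (since the principal curvatures computed in Proposition~\ref{pr:strict-ellipticity} give $H(Dh, D^2 h) = \frac{n}{2}\sqrt{2m}\, r^{-n/2} > 0$) that both $h$ and $v$ satisfy $R=0$ and $H\ge 0$ on the annulus.

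Following the linearization in the proof of Theorem~\ref{th:strong}, I would write
\[
0 = R(Dh, D^2h) - R(Dv, D^2v) = \sum_{i,j} a^{ij}(x)\, w_{ij} + \sum_i b^i(x)\, w_i,
\]
where $a^{ij} = \frac{1}{\sqrt{1+|Dh|^2}}\bigl(M_1^{ij} + M_2^{ij}\bigr)$, with $M_1 = H g^{ij} - \sum_k A^i_k g^{kj}$ evaluated at $(Dh, D^2h)$ and $M_2$ the same expression at $(Dh, D^2v)$, while $b^i$ is bounded on the compact annulus. The key observation is that at any point $p$ in the open annulus at which $Dh(p) = Dv(p)$ -- in particular at any interior extremum of $w$ -- the matrix $(a^{ij})(p)$ is positive definite. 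Indeed, $M_1(p)$ is positive definite by Proposition~\ref{pr:strict-ellipticity}, and because $Dh(p) = Dv(p)$, the mixed matrix $M_2(p)$ reduces to $H(Dv, D^2v) g^{ij}(Dv) - \sum_k A^i_k(Dv, D^2v) g^{kj}(Dv)$, which is symmetric and semi-positive definite by Proposition~\ref{pr:ellipticity} applied to $v$. Thus $M_1(p) + M_2(p)$ is positive definite, and by continuity the linearized operator is strictly elliptic in an open neighborhood of $p$.

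With this in hand, suppose $w$ attains its maximum on $\overline{B}_{r_2} \setminus B_{r_1}$ at an interior point $p_0$. Then $Dw(p_0) = 0$, so the above applies, and the classical strong maximum principle yields $w \equiv w(p_0)$ in a neighborhood of $p_0$. Hence the set of interior maxima is both open (by this argument) and closed (by continuity) in the connected open annulus $B_{r_2} \setminus \overline{B}_{r_1}$, and must coincide with the whole open annulus; by continuity, $w$ is constant on $\overline{B}_{r_2} \setminus B_{r_1}$, so its value on $S_{r_1} \cup S_{r_2}$ equals $w(p_0)$. The interior-minimum case is handled identically by applying the same argument to $-w$, which satisfies $L(-w) = 0$ and whose maxima are the minima of $w$. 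Either way, the boundary-identification of $\max$ and $\min$ follows.

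The main obstacle is the ellipticity step. Without the \emph{strict} positivity coming from Proposition~\ref{pr:strict-ellipticity} -- which crucially uses $m > 0$ -- one would only conclude $M_1 + M_2 \ge 0$, and the strong maximum principle would fail; this is precisely why Theorem~\ref{th:Schwarzschild} (which concerns the critical regime where the comparison Schwarzschild solution is absent) requires a separate argument. The subtle point here is that local ellipticity is needed only at the interior extremum, where the gradient equality $Dh = Dv$ allows the semi-positivity from Proposition~\ref{pr:ellipticity} for $v$ to apply to the mixed expression $M_2$; away from the extremum we make no claim about the sign of $M_2$, and indeed the asymmetry of the linearization (interpolating $\xi$ at $Dh$ and $p$ along the segment from $Dv$ to $Dh$) is essential for the argument to go through.
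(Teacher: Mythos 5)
Your proof is correct, and it reaches the conclusion by a genuinely different route at the key ellipticity step. The paper proves that the coefficient matrix $(a^{ij})$ is positive definite \emph{uniformly} on $\mathbb{R}^n\setminus B_r$ for $r\gg 1$: it compares the mixed matrix $H(Dh,D^2v)\delta^i_k - A^i_k(Dh,D^2v)$ with $H(Dv,D^2v)\delta^i_k - A^i_k(Dv,D^2v)$, bounds the discrepancy by $O(|Dv|^2|D^2v|+|Dh|^2|D^2h|)=o\left(r^{(-3n+2)/4}\right)$ using asymptotic flatness, and absorbs this error into the Schwarzschild lower bound $\frac{n-2}{2}\sqrt{2m}\,r^{-n/2}I$ from Proposition~\ref{pr:strict-ellipticity}; the threshold $r\gg1$ is exactly what is needed for this absorption, and the weak and strong maximum principles then apply verbatim. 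You instead establish strict ellipticity only at interior extrema of $w=h-v$, where $Dw=0$ forces $Dh=Dv$ and the mixed matrix becomes \emph{exactly} the semi-positive matrix of Proposition~\ref{pr:ellipticity} for $v$, with no error term at all; you then recover the strong maximum principle by the local argument plus the open-closed connectedness step, and deduce the boundary identification of $\max$ and $\min$ from it. Your version avoids the decay estimates entirely and in fact works on any annulus on which both graphs are defined and the hypotheses on $v$ hold (so the role of $r\gg1$ reduces to clearing $\Omega$ and $\overline{B}_{(2m)^{1/(n-2)}}$), at the cost of having to reassemble the weak maximum principle from the strong one rather than quoting it directly --- which you do correctly, including the symmetry check implicit in adding a positive definite and a semi-positive definite matrix and the reduction of the minimum case to $-w$. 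Both arguments share the same skeleton (the same linearization from Theorem~\ref{th:strong} and the same two input propositions), so the difference is localized but real, and your observation that strict ellipticity is only ever needed at touching points is a worthwhile simplification.
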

\begin{proof}
As computed in the proof of Theorem~\ref{th:strong}, we have
\begin{align*}
	0&= R(Dh, D^2 h) - R (Dv, D^2 v)\\
	&=\sum_{i,j} a^{ij} (h_{ij}-v_{ij})  +  \sum_i b^i\, (h_i-v_i).
\end{align*}
where
\begin{align*}
	a^{ij}&=\frac{1}{\sqrt{1+|Dh|^2}} \sum_k \bigg( H(Dh, D^2 h) \delta^i_k-  A^i_k(Dh, D^2 h)\\
	&\quad+  H(Dh, D^2 v) \delta^i_k-  A^i_k(Dh, D^2 v)   \bigg) g^{kj}(Dh).
\end{align*}
We shall prove that $(a_{ij})$ is positive definite in $\mathbb{R}^n \setminus B_r$ for $r\gg1$. Then the lemma follows directly from the standard maximum principles. Because $(g^{kj})$ is positive definite, we can prove the positivity of $(a^{ij})$ by showing that the matrix
\begin{align} \label{eq:matrix}
	 (H(Dh, D^2 h) \delta^i_k-  A^i_k(Dh, D^2 h)+  H(Dh, D^2 v) \delta^i_k -  A^i_k(Dh, D^2 v) )
\end{align}
is positive definite. By direct computation, 
\begin{align*}
	& H(Dh, D^2 v) \delta^i_k -  A^i_k(Dh, D^2 v) \\
	 &=H(Dv, D^2 v) \delta^i_k -  A^i_k(Dv, D^2 v) + O(|Dv|^2 |D^2 v| + |Dh|^2 |D^2 h|)\\
	 &=H(Dv, D^2 v) \delta^i_k -  A^i_k(Dv, D^2 v) + o\left(r^{(-3n+2)/4}\right),
\end{align*}
where $r = |x|$ and we use the asymptotic flatness of $h$ and $v$. By Proposition~\ref{pr:ellipticity}, $\left(H(Dv, D^2 v) \delta^i_k -  A^i_k(Dv, D^2 v)\right)$ is semi-positive definite.  By Proposition~\ref{pr:strict-ellipticity}, 
\[
	\left(H (Dh, D^2 h)\delta^i_k - A^i_k(Dh, D^2 h)\right) \ge \frac{n-2}{2} \sqrt{2m} r^{-n/2} I.
\]
The right hand side above is positive enough to absorb the error term $o(r^{(-3n+2)/4})$ if $r\gg 1$. Hence \eqref{eq:matrix} is positive definite. 
\end{proof}

\section{Proofs of Theorem~\ref{th:equality} and Theorem~\ref{th:Schwarzschild}} \label{se:main}

We need the the following inequalities. Proposition~\ref{pr:Alexandrov-Fenchel} is a special case of the Alexandrov-Fenchel inequalities. The classical result was proven for a convex domain. It has been generalized to a star-shaped domain $\Omega$ with a mean-convex boundary~\cite{Guan-Li}  or a domain whose boundary is outer-minizing by Huisken and  by \cite{Freire-Schwartz}. 
\begin{prop}\label{pr:Alexandrov-Fenchel}
Let $\Omega \subset \mathbb{R}^n$ be star-shaped or outer-minimizing and let $\Sigma = \p \Omega$ be mean convex. Denote by $H_{\Sigma}$ the mean curvature of $\Sigma$ with respect to the inward unit normal vector. Then 
\[
	\frac{1}{2(n-1) \omega_{n-1} } \int_{\Sigma} H_{\Sigma} \, d\sigma \ge \frac{1}{2} \left(\frac{|\Sigma| }{ \omega_{n-1}} \right)^{\frac{n-2}{n-1}}
\]
with equality if and only if $\Sigma$ is an $(n-1)$-sphere. 
\end{prop}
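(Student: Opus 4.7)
The plan is to prove the inequality via inverse mean curvature flow (IMCF), tracking a scale-invariant Hsiung--Minkowski-type quantity. Under a smooth IMCF $\partial_t X = \nu/H_{\Sigma_t}$, one has $\partial_t d\sigma = d\sigma$ (so $|\Sigma_t|=e^t|\Sigma|$) and, after integrating by parts to cancel the Laplacian term in $\partial_t H$ on the closed hypersurface,
\[
\frac{d}{dt}\int_{\Sigma_t}H_{\Sigma_t}\,d\sigma = \int_{\Sigma_t}\Bigl(H_{\Sigma_t}-\frac{|A|^2}{H_{\Sigma_t}}\Bigr)\,d\sigma \le \frac{n-2}{n-1}\int_{\Sigma_t}H_{\Sigma_t}\,d\sigma,
\]
where the inequality is the Cauchy--Schwarz bound $|A|^2 \ge H_{\Sigma_t}^2/(n-1)$, valid wherever $H_{\Sigma_t}>0$. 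Combining this with the exponential growth of area gives that
\[
Q(\Sigma_t) := |\Sigma_t|^{-(n-2)/(n-1)}\int_{\Sigma_t}H_{\Sigma_t}\,d\sigma
\]
is monotone non-increasing along the flow.

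In the star-shaped mean-convex case, I would invoke Gerhardt's theorem, which guarantees that smooth IMCF starting from such a $\Sigma$ exists for all time, preserves star-shapedness, and becomes asymptotically round after the natural rescaling. Hence $Q(\Sigma_t)\to(n-1)\omega_{n-1}^{1/(n-1)}$, which is exactly the sphere value; monotonicity then yields the desired inequality for $\Sigma=\Sigma_0$ (this is essentially the Guan--Li argument). In the outer-minimizing case, smooth IMCF need not exist, so I would instead appeal to the Huisken--Ilmanen weak level-set formulation and its higher-dimensional adaptation by Freire--Schwartz. Weak solutions may jump over non-outer-minimizing pieces, but the outer-minimizing hypothesis on the initial $\Sigma$ precisely ensures that the flow does not jump at $t=0$ and that $Q$ is non-increasing across any later jumps, since each jump replaces a surface by one of no larger area whose total mean curvature also decreases appropriately. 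Asymptotic roundness of the weak flow then provides the same limiting value of $Q$.

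For the equality statement, if equality is attained then monotonicity forces $\frac{d}{dt}Q \equiv 0$ along the flow in a neighborhood of $t=0$, which forces $|A|^2 \equiv H_{\Sigma}^2/(n-1)$ on $\Sigma$; equivalently, every principal curvature equals $H_{\Sigma}/(n-1)$ at every point. Thus $\Sigma$ is totally umbilic and compact, hence a round $(n-1)$-sphere by the classical Codazzi argument. The main obstacle will be the outer-minimizing case: one must verify carefully that the monotonicity of $Q$ survives the jumps of weak IMCF and that asymptotic sphericity of the weak flow continues to hold for the mean-integral functional (rather than just the Hawking mass). The star-shaped case is by comparison routine once Gerhardt's long-time existence is in hand; accordingly, rather than reproduce these substantial arguments here, I would simply cite Guan--Li and Freire--Schwartz (building on Huisken's unpublished work) for the two settings in which the statement is needed.
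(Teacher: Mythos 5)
Your proposal is correct and ends up exactly where the paper does: the paper offers no proof of this proposition, simply citing Guan--Li for the star-shaped mean-convex case and Huisken/Freire--Schwartz for the outer-minimizing case, which is precisely what you do. Your supplementary sketch of the inverse-mean-curvature-flow monotonicity of $Q(\Sigma_t)$ and the equality analysis via pointwise umbilicity is accurate and correctly identifies the mechanism behind those citations.
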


\begin{prop} \label{pr:elementary}
Let $a_1, a_2, \dots, a_k$ be non-negative real numbers and $0\le \beta \le 1$. Then 
\[
	\sum_{i=1}^k a_i^{\beta} \ge \bigg(\sum_{i=1}^k a_i \bigg)^{\beta}.
\]
If  $0\le \beta < 1$, the equality holds if and only if at most one of $a_i$ is non-zero.
\end{prop}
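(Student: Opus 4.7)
The plan is to reduce to the two-variable case and then induct on $k$. The base case $k=1$ is an equality. For the inductive step, it suffices to establish the inequality for $k=2$: for all $a,b\ge 0$,
\[
	a^{\beta} + b^{\beta} \ge (a+b)^{\beta},
\]
with equality (when $0\le\beta<1$) exactly when $a=0$ or $b=0$. If $a+b = 0$ this is trivial, so assume $a+b>0$ and set $s = a/(a+b) \in [0,1]$. Dividing both sides by $(a+b)^{\beta}$ reduces the statement to
\[
	s^{\beta} + (1-s)^{\beta} \ge 1.
\]
Since $0\le s\le 1$ and $0\le \beta\le 1$, we have $s^{\beta}\ge s$ and $(1-s)^{\beta}\ge 1-s$, whose sum is $1$. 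When $0\le\beta<1$, the inequality $s^{\beta}\ge s$ is strict unless $s\in\{0,1\}$, so equality forces $s=0$ or $s=1$, i.e. $a=0$ or $b=0$.

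For general $k\ge 3$, assume the result for $k-1$ non-negative reals. Applying the inductive hypothesis to $a_1,\dots,a_{k-1}$ and then the $k=2$ case to $\sum_{i=1}^{k-1}a_i$ and $a_k$ gives
\[
	\sum_{i=1}^{k}a_i^{\beta} = a_k^{\beta} + \sum_{i=1}^{k-1}a_i^{\beta} \;\ge\; a_k^{\beta} + \Bigl(\sum_{i=1}^{k-1}a_i\Bigr)^{\beta} \;\ge\; \Bigl(\sum_{i=1}^{k}a_i\Bigr)^{\beta}.
\]

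For the equality case when $0\le\beta<1$, suppose equality holds above. The second inequality (the $k=2$ case) forces at most one of $a_k$ and $\sum_{i=1}^{k-1}a_i$ to be nonzero. If $a_k=0$, the first inequality becomes an equality among $\{a_1,\dots,a_{k-1}\}$, and the inductive hypothesis forces at most one of these to be nonzero. If instead $\sum_{i=1}^{k-1}a_i = 0$, then $a_1=\dots=a_{k-1}=0$ and only $a_k$ can be nonzero. In either situation, at most one of $a_1,\dots,a_k$ is nonzero, closing the induction. The argument is entirely elementary; no genuine obstacle arises beyond tracking the equality condition through the induction.
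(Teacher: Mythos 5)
Your proof is correct and takes essentially the same approach as the paper: reduce to the two-variable inequality $a^{\beta}+b^{\beta}\ge (a+b)^{\beta}$ and verify it by an elementary monotonicity argument (the paper differentiates $w(y)=x^{\beta}+y^{\beta}-(x+y)^{\beta}$ in $y$, while you normalize by $(a+b)^{\beta}$ and use $s^{\beta}\ge s$ on $[0,1]$ --- an inessential variation). Your explicit induction and tracking of the equality case are fine; the paper simply says ``without loss of generality $k=2$'' and leaves that bookkeeping implicit.
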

\begin{proof}
Without loss of generality, we assume $k=2$. We shall prove that $x^{\beta} + y^{\beta} \ge (x+y)^{\beta}$ if $0\le \beta \le 1$ and $x, y \ge 0$. Fix $\beta, x$ and define $w(y) = x^{\beta} + y^{\beta} - (x+y)^{\beta}$. Then $w(0)=0$ and 
\[
	w'(y) = \beta \left( y^{\beta-1} - (x+y)^{\beta-1}\right) \ge 0 \quad \mbox{ for all } y\ge 0.
\]	
Hence $w(y)\ge 0$ for all $y\ge 0$ with $w(y) =0$ if and only if $x=0$, or $y=0$, or $\beta=1$.
\end{proof}

\begin{pfPenrose}[\cite{Lam}]
The scalar curvature of the graph of $f$ has a divergence form in terms of the derivatives of $f$ \cite{Reilly:1973-Mich} (see also \cite{Lam, Huang-Wu})
\[
    R=  \sum_j \partial_j \sum_i \left(\frac{f_{ii} f_j - f_{ij} f_i}{1+|Df|^2} \right). 
\]
Let $\Omega^{\epsilon}$ be a bounded open set in $\mathbb{R}^n$ that contains $\Omega$ with $\Omega^{\epsilon} \to \Omega$ as $\epsilon \to 0$, and let each connected component $\Sigma^{\epsilon}_k$ of $\p \Omega^{\epsilon}$ be the level set of $f$. By applying the divergence theorem over $\mathbb{R}^n \setminus \Omega^{\epsilon}$,  we have
\begin{align*} 
	&2(n-1) \omega_{n-1} m\\
	&=\lim_{r\to \infty}\int_{S_r} \frac{1}{ 1 + |Df|^2 } \sum_{i,j}  (f_{ii} f_j - f_{ij} f_i)  \frac{x^j}{|x|}  \, d\sigma \\
	&=  \int_{\mathbb{R}^n \setminus \Omega^{\epsilon}} R \, dx-\sum_k \int_{\Sigma^{\epsilon}_k} \frac{1}{ 1 + |Df|^2 } \sum_{i,j}  (f_{ii} f_j - f_{ij} f_i) \eta^j  \, d\sigma \\
	&= \int_{\mathbb{R}^n \setminus \Omega^{\epsilon}} R \, dx+\sum_k \int_{\Sigma^{\epsilon}_k} \frac{|Df|^2}{ 1 + |Df|^2} H_{\Sigma^{\epsilon}_k}\, d\sigma, 
\end{align*}
where $H_{\Sigma_k^{\epsilon}}$ denotes the mean curvature of the level set $\Sigma_k^{\epsilon}$ with respect to the unit normal vector $\eta$ pointing \emph{inward} to the region enclosed by $\Sigma_k^{\epsilon}$ (\emph{cf.} \cite[Proof of Lemma 5.6]{Huang-Wu}). Let $\epsilon$ tend to zero. Then each level set $\Sigma^{\epsilon}_k$ tends to  the connected component $\Sigma_k$ of $\p \Omega$ and $|Df| \to \infty$. Therefore, we have
\begin{align*}
	m&= \frac{1}{2(n-1) \omega_{n-1} }\left( \int_{\mathbb{R}^n \setminus \Omega} R \, dx+\sum_k \int_{\Sigma_k} H_{\Sigma_k}\, d\sigma \right)\notag
\\
	&\ge \sum_k \frac{1}{2(n-1) \omega_{n-1} } \int_{\Sigma_k}  H_{\Sigma_k}\, d\sigma\\
	&\ge   \sum_k \frac{1}{2} \left(\frac{|\Sigma_k| }{ \omega_{n-1}} \right)^{\frac{n-2}{n-1}} \qquad \mbox{(by Proposition~\ref{pr:Alexandrov-Fenchel})}\\
	&\ge \frac{1}{2} \left( \frac{|\p \Omega|}{\omega_{n-1}}\right)^{\frac{n-2}{n-1}} \qquad \mbox{(by Proposition~\ref{pr:elementary})}.
\end{align*}
\qed
\end{pfPenrose}

\begin{cor} \label{cr:equality}
Under the conditions of Theorem~\ref{th:Penrose}, 
if 
\[
	m= \frac{1}{2} \left( \frac{|\p \Omega|}{\omega_{n-1}}\right)^{\frac{n-2}{n-1}},
\] 
then $R\equiv 0$ in $\mathbb{R}^n \setminus \Omega$, and $\p \Omega$ is a sphere of radius $(2m)^{\frac{1}{n-2}}$. 
\end{cor}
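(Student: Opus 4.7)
The strategy is to read the proof of Theorem~\ref{th:Penrose} as a chain of three inequalities and force each one to be an equality. Specifically, assuming the hypothesized equality $m = \frac{1}{2}(|\partial\Omega|/\omega_{n-1})^{(n-2)/(n-1)}$, the derivation at the end of Theorem~\ref{th:Penrose} shows
\[
	\frac{1}{2(n-1)\omega_{n-1}}\int_{\mathbb{R}^n\setminus\Omega} R\,dx + \frac{1}{2(n-1)\omega_{n-1}}\sum_k\int_{\Sigma_k} H_{\Sigma_k}\,d\sigma \;\ge\; \sum_k \frac{1}{2}\!\left(\tfrac{|\Sigma_k|}{\omega_{n-1}}\right)^{\!\frac{n-2}{n-1}} \;\ge\; \frac{1}{2}\!\left(\tfrac{|\partial\Omega|}{\omega_{n-1}}\right)^{\!\frac{n-2}{n-1}}\!,
\]
and the left-hand side equals $m$. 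Under equality, each inequality must be saturated.

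First I would exploit equality in the step that discarded $\int R\,dx$. Since $R \ge 0$ everywhere on the graph (which is the hypothesis of Theorem~\ref{th:Penrose}), the equation $\int_{\mathbb{R}^n \setminus \Omega} R\,dx = 0$ forces $R \equiv 0$ in $\mathbb{R}^n \setminus \Omega$. (This step needs no regularity beyond what is already assumed; continuity of $R$ on the graph up to boundary is enough.)

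Next I would extract the geometric consequences of the remaining two equalities. Equality in Proposition~\ref{pr:Alexandrov-Fenchel}, applied componentwise, forces every connected component $\Sigma_k$ of $\partial\Omega$ to be a round $(n-1)$-sphere. To apply that proposition we need each $\Sigma_k$ to be mean-convex and either star-shaped or outer-minimizing, both of which are in the hypotheses of Theorem~\ref{th:Penrose}. Equality in Proposition~\ref{pr:elementary}, applied to the non-negative numbers $a_k = |\Sigma_k|/\omega_{n-1}$ with $\beta = (n-2)/(n-1) < 1$, forces at most one of the $a_k$ to be non-zero; since $\partial\Omega$ is non-empty, exactly one component survives, so $\partial\Omega$ is a single round $(n-1)$-sphere $S_r$ of some radius $r$.

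Finally, I would pin down $r$ by a direct computation: from $|\partial\Omega| = \omega_{n-1} r^{n-1}$ and the equality $m = \tfrac{1}{2}(|\partial\Omega|/\omega_{n-1})^{(n-2)/(n-1)}$, we read off $r^{n-2} = 2m$, i.e., $r = (2m)^{1/(n-2)}$. No step here is delicate — the only thing to keep straight is that all three sources of slack in the proof of Theorem~\ref{th:Penrose} must vanish simultaneously, and each gives precisely one of the three conclusions ($R\equiv 0$, round, connected) needed for the corollary.
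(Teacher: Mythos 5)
Your proposal is correct and is exactly the argument the paper intends: the corollary is stated without a separate proof precisely because it follows by forcing equality in each of the three inequalities at the end of the proof of Theorem~\ref{th:Penrose}, yielding $R\equiv 0$, roundness of each component via Proposition~\ref{pr:Alexandrov-Fenchel}, connectedness via Proposition~\ref{pr:elementary}, and the radius from $|\p\Omega|=\omega_{n-1}r^{n-1}$. Nothing to add.
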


The proof of Theorem~\ref{th:equality} makes use the maximum principles proven in Section~\ref{se:maximum}. The case $n=3$ or $4$ is more subtle because the graphing function of the Schwarzschild solution tends to infinity as $|x| \to \infty$, and difficulty arises when comparing two unbounded graphs.  In that case, instead of using the strong maximum principles Theorem~\ref{th:strong} or Theorem~\ref{th:boundary} directly, we first control the growth at infinity, by the asymptotic flatness of the graph and Theorem~\ref{th:global-maximum}.  

\begin{pfthm}
Suppose  the equality of the Penrose inequality holds. By Corollary~\ref{cr:equality}, the scalar curvature of the graph of $f$ is identically zero everywhere and $\p \Omega$ is a round sphere of radius $(2m)^{\frac{1}{n-2}}$. By translating $f$, we assume that $f=0$ on $\p \Omega$  and $\p \Omega = S_{(2m)^{\frac{1}{n-2}}} \subset \{ x^{n+1}=0\}$. By Theorem~\ref{th:mean-convexity}, the mean curvature of the graph of $f$ must have a sign. Suppose that $H\ge 0$ with respect to the upward unit normal. 
Then, we have either $\lim_{|x| \to \infty} f(x) = C$ for some positive constant $C$ or $\lim_{|x| \to \infty} f(x) = +\infty$.

Let $h$ be the function in Proposition~\ref{pr:Schwarzschild} which gives the exterior region of the Schwarzschild solution of mass $m$ outside its minimal boundary. By translating $h$, we assume that its minimal boundary is $S_{(2m)^{\frac{1}{n-2}}} \subset \{ x^{n+1}=0\}$. We consider two cases, depending on the dimension $n$.

\textbf{Case 1:}  $n\ge 5$. In this case, $\lim_{|x| \to \infty} h(x)= C_0$ for some bounded constant $C_0$.   

If $\lim_{|x|\to \infty} f(x) = C\le C_0$, we let $u_{\lambda}=h+ \lambda$ for $\lambda\ge0$. For $\lambda$ sufficiently large, $u_{\lambda} > f$. We then continuously decrease $\lambda$, until $u_{\lambda} = f$ at $p\in \mathbb{R}^n \setminus B_{(2m)^{\frac{1}{n-2}}}$ for the first time. If $p$ is an interior point,  $u_{\lambda} \equiv f$ by Theorem~\ref{th:strong}. If $p$ is a boundary point in $S_{(2m)^{\frac{1}{n-2}}}$, $u_{\lambda} \equiv f$ by Theorem~\ref{th:boundary}. Hence, the graph of $f$ is identical to the exterior region of the Schwarzschild solution of mass $m$ outside its minimal boundary.

If $\lim_{|x|\to \infty} f(x) = C\ge C_0$ or $\lim_{|x| \to \infty} f(x) = +\infty$, we consider $v_{\lambda}=h-\lambda$ for $\lambda\ge 0$. Note that $f > v_{\lambda}$ for $\lambda$ sufficiently large. We then continuously decrease $\lambda$ until $f=v_{\lambda}$ for the first time. Then by either Theorem~\ref{th:strong} or Theorem~\ref{th:boundary}, we have $f\equiv h$ in $\mathbb{R}^n \setminus B_{(2m)^{\frac{1}{n-2}}}$.

\textbf{Case 2:}  $ n= 3$ or $4$. (In this case, $\lim_{|x| \to \infty} h(x)= \infty$.) 

We claim that either $\max_{|x|=r} f(x)> h(r)$ or $\max_{|x|=r} f(x) \le h(r)$ for all $r$ sufficiently large. Suppose the first statement is false. Then there exists a sequence of positive numbers $\{ r_k\} $ with $r_k \to \infty$ as $k\to \infty$ so that $\max_{|x|=r_k} f(x)\le h(r_k)$. Then by Theorem~\ref{th:global-maximum}, we have $\max_{|x|=r} f(x) \le h(r)$ for all $r$ sufficiently large. This proves the claim.

Suppose $\max_{|x|=r} f(x) > h(r)$ for all $r$ sufficiently large. By the assumption $\min_{|x| = r} f + C\ge \max_{|x|=r} f(x)$ for $r$ sufficiently large, we have for all $r \ge (2m)^{\frac{1}{n-2}}$
\[
	\min_{|x|=r} f(x) > h(r)-C',
\] 
for some constant $C' > 0$. Hence,  $f(x) > h(x) - C'$ for all $x \in \mathbb{R}^n\setminus  B_{(2m)^{\frac{1}{n-2}}}$. We then continuously decrease $C'$ until $f(x) = h(x) - C'$ for the first time. Then either Theorem~\ref{th:strong} or Theorem~\ref{th:boundary} implies $f(x) \equiv h(x)$, which leads a contradiction. Hence, $\max_{|x|=r} f(x) \le h(r)$ for all $r$ sufficiently large. Let $v_{\lambda}=h+\lambda$ for $\lambda>0$ sufficiently large. Then we continuously decrease $\lambda$. The graph of $v_{\lambda}$ approaches the graph of $f$ from above, until they touch for the first time.  By either Theorem~\ref{th:strong} or Theorem~\ref{th:boundary}, we conclude that $f \equiv h$.
\qed 
\end{pfthm}

\begin{pfSchwarzschild}
Suppose to the contrary that there is a complete $C^{n+1}$ hypersurface $M$ of one end with zero scalar curvature in $\mathbb{R}^{n+1}$ which is identical to the Schwarzschild solution $h$ (given in Proposition~\ref{pr:Schwarzschild}) of $m>0$ outside a compact set. We consider the graph of $h-\lambda$ for some constant $\lambda>0$. For $\lambda\gg 1$, the graph of $h-\lambda$ has no intersection with $M$. Then we decrease $\lambda$ until the graph of $h-\lambda$ approaches to $M$ from below and touches $M$ at a point $p$ for the first time. 

Note that by \cite[Theorem 4]{Huang-Wu} the mean curvature of $M$ has a sign. Then by Proposition~\ref{pr:HHR} and the fact that the level set of $M$ passing through $p$ is mean convex near $p$ with respect to the inward unit normal, the mean curvature of $M$ near $p$ (with respect to the unit normal vector pointing away from the graph of $h$)  is non-negative. Depending on that $p$ is either an interior point or a boundary point of the graph of $h-\lambda$, we apply either Theorem~\ref{th:strong} or Theorem~\ref{th:boundary} to conclude that $M$ is identical to the graph of $h$ outside $B_{(2m)^{\frac{1}{n-2}}}$ over $\mathbb{R}^n$. Then notice that $M$ must graphical in a neighborhood of $S_{(2m)^{\frac{1}{n-2}}}$ in $B_{(2m)^{\frac{1}{n-2}}}$, for otherwise $p$ cannot be the first touch point. Applying Theorem~\ref{th:strong-max} yields a contradiction. 
\qed
\end{pfSchwarzschild}

\end{document}